\title[Infinite Wordle and the mastermind numbers]{Infinite Wordle and the mastermind numbers}
\author{Joel David Hamkins}
\address[Joel David Hamkins]
{O'Hara Professor of Logic, University of Notre Dame, 100 Malloy Hall, Notre Dame, IN 46556 USA
\textup{and} Associate Faculty Member, Professor of Logic, University of Oxford, United Kingdom
}
\email{jdhamkins@nd.edu}
\urladdr{http://jdh.hamkins.org}
\thanks{I am grateful to Corey Switzer (Vienna) for an extremely helpful discussion about the eventually different number. And thanks also to Toby Ord (Oxford) for a helpful exchange. Commentary can be made about this article on my blog at \href{http://jdh.hamkins.org/infinite-wordle-mastermind}{http://jdh.hamkins.org/infinite-wordle-mastermind}.}
\subjclass{03E17, 91A46}
\newtheorem{theorem}{Theorem}
\newtheorem*{theorem*}{Theorem}
\newtheorem*{maintheorem*}{Main Theorem}
\newtheorem*{maintheorems*}{Main Theorems}
\newtheorem*{corollary*}{Corollary}
\newtheorem*{corollaries*}{Corollaries}
\newtheorem{observation}[theorem]{Observation}
\theoremstyle{definition}
\newtheorem*{definition*}{Definition}
\newtheorem{question}[theorem]{Question}
\newtheorem*{question*}{Question}
\newtheorem*{questions*}{Questions}
\newtheorem*{mainquestion*}{Main Question} 
\newtheorem*{openquestion*}{Open Question} 
\theoremstyle{remark}
\newcommand{\QED}{\end{proof}}
\def\proclaim[#1]{{\bf #1}}
\def\BF#1.{{\bf #1.}}
\def\says#1:#2\par{\item[#1] #2\par}
\newcommand{\C}{{\mathbb C}}
\renewcommand{\P}{{\mathbb P}}
\newcommand{\R}{{\mathbb R}}
\newcommand{\continuum}{\mathfrak{c}}
\newcommand{\dotminus}{\mathbin{\text{\@dotminus}}}
\newcommand{\@dotminus}{%
  \ooalign{\hidewidth\raise1ex\hbox{.}\hidewidth\cr$\m@th-$\cr}%
}
\newcommand{\of}{\subseteq}
\newcommand{\set}[1]{\{\,{#1}\,\}}
\newcommand{\ran}{\mathop{\rm ran}}
\newcommand{\restrict}{\upharpoonright} 
\newcommand\dbrace{\hskip-1.5em\raise3pt\hbox{\rotatebox[origin=c]{-35}{$\left.\strut^{\phantom{|}}\right\}$}}}
\newcommand\UParroW{{\setbox0\hbox{$\Uparrow$}\rlap{\hbox to \wd0{\hss$\mid$\hss}}\box0}}
\renewcommand{\setminus}{\raise.3ex\hbox{\rotatebox{-20}{$-$}}} 
\newcommand{\Union}{\bigcup}
\newcommand{\smalllt}{\mathrel{\mathchoice{\raise2pt\hbox{$\scriptstyle<$}}{\raise1pt\hbox{$\scriptstyle<$}}{\raise0pt\hbox{$\scriptscriptstyle<$}}{\scriptscriptstyle<}}}
\newcommand{\smallleq}{\mathrel{\mathchoice{\raise2pt\hbox{$\scriptstyle\leq$}}{\raise1pt\hbox{$\scriptstyle\leq$}}{\raise1pt\hbox{$\scriptscriptstyle\leq$}}{\scriptscriptstyle\leq}}}
   \def\DHLhksqrt#1#2{%
   \setbox0=\hbox{$#1\sqrt{#2\,}$}\dimen0=\ht0
   \advance\dimen0-0.2\ht0
   \setbox2=\hbox{\vrule height\ht0 depth -\dimen0}%
   {\box0\lower0.4pt\box2}}
\def\[#1]{\mathopen{\lbrack\!\lbrack}#1\mathclose{\rbrack\!\rbrack}}
\newbox\gnBoxA
\newbox\gnBoxB
\newdimen\gnCornerHgt
\newdimen\gnArgHgt
\def\gcode #1{%
\setbox\gnBoxA=\hbox{$#1$}%
\setbox\gnBoxB=\hbox{$\bar #1$}%
\gnArgHgt=\ht\gnBoxB%
\ifnum     \gnArgHgt<\gnCornerHgt \gnArgHgt=0pt%
\else \advance \gnArgHgt by -\gnCornerHgt%
\fi \raise\gnArgHgt\hbox{\tiny$\ulcorner$} \box\gnBoxA %
\raise\gnArgHgt\hbox{\tiny$\urcorner$}}
\newcommand{\UnderTilde}[1]{{\setbox1=\hbox{$#1$}\baselineskip=0pt\vtop{\hbox{$#1$}\hbox to\wd1{\hfil$\sim$\hfil}}}{}}
\newcommand{\Undertilde}[1]{{\setbox1=\hbox{$#1$}\baselineskip=0pt\vtop{\hbox{$#1$}\hbox to\wd1{\hfil$\scriptstyle\sim$\hfil}}}{}}
\newcommand{\undertilde}[1]{{\setbox1=\hbox{$#1$}\baselineskip=0pt\vtop{\hbox{$#1$}\hbox to\wd1{\hfil$\scriptscriptstyle\sim$\hfil}}}{}}
\newcommand{\UnderdTilde}[1]{{\setbox1=\hbox{$#1$}\baselineskip=0pt\vtop{\hbox{$#1$}\hbox to\wd1{\hfil$\approx$\hfil}}}{}}
\newcommand{\Underdtilde}[1]{{\setbox1=\hbox{$#1$}\baselineskip=0pt\vtop{\hbox{$#1$}\hbox to\wd1{\hfil\scriptsize$\approx$\hfil}}}{}}
\def\<#1>{\left\langle#1\right\rangle}
\newcommand{\ZFC}{{\rm ZFC}}
\newcommand{\CH}{{\rm CH}}
\newcommand{\cell}[1]{\boxit{\hbox to 17pt{\strut\hfil$#1$\hfil}}}
\newcommand{\head}[2]{\lower2pt\vbox{\hbox{\strut\footnotesize\it\hskip3pt#2}\boxit{\cell#1}}}
\newcommand{\boxit}[1]{\setbox4=\hbox{\kern2pt#1\kern2pt}\hbox{\vrule\vbox{\hrule\kern2pt\box4\kern2pt\hrule}\vrule}}
\newcommand{\Col}[3]{\hbox{\vbox{\baselineskip=0pt\parskip=0pt\cell#1\cell#2\cell#3}}}
\newcommand{\tapenames}{\raise 5pt\vbox to .7in{\hbox to .8in{\it\hfill input: \strut}\vfill\hbox to
.8in{\it\hfill scratch: \strut}\vfill\hbox to .8in{\it\hfill output: \strut}}}
\newcommand{\Head}[4]{\lower2pt\vbox{\hbox to25pt{\strut\footnotesize\it\hfill#4\hfill}\boxit{\Col#1#2#3}}}
\newcommand{\Dots}{\raise 5pt\vbox to .7in{\hbox{\ $\cdots$\strut}\vfill\hbox{\ $\cdots$\strut}\vfill\hbox{\
$\cdots$\strut}}}
\renewcommand{\UrlFont}{} 
\addcolon\nolinkurl{#1}}\iffieldundef{eprintclass}{}{\UrlFont{\mkbibbrackets{\thefield{eprintclass}}}}}
\addcolon\nolinkurl{#1}\iffieldundef{eprintclass}{}{\UrlFont{\mkbibbrackets{\thefield{eprintclass}}}}}}
\newcommand\Mastermind[1]{%
 \foreach \c in {#1} { node[peg=\c] {} ++(4,0) }}
\newcommand\Masterresult[4]{%
 +(45:1) node[rpeg=#2] {}
 +(135:1) node[rpeg=#1] {}
 +(225:1) node[rpeg=#3] {}
 +(-45:1) node[rpeg=#4] {}
}
\newcommand\Wordle[1]{\foreach \w/\c [count=\x from 0] in {#1} { +(\x,0) node[fill=\c] {\textbf{\w}}}}
\newcommand\Nerdle[1]{\foreach \w/\c [count=\x from 0] in {#1} {+(\x,0) node[fill=\c] {\textbf{\w}} }
}
\newcommand{\inj}{{\overset{\hfil\scriptstyle\omega}{\scriptscriptstyle\hookrightarrow}}}
\newcommand\mm{\mathbbm{m}}
\newcommand\mms{\mm_{\scriptscriptstyle=,\neq}}
\newcommand\mmeq{\mm_{\scriptscriptstyle=}}
\newcommand\mmneq{\mm_{\scriptscriptstyle\neq}}
\newcommand\mmstr{\widehat{\mm}}
\newcommand\mmstrs{\mmstr_{\scriptscriptstyle=,\neq}}
\newcommand\mmstreq{\mmstr_{\scriptscriptstyle=}}
\newcommand\mmstrneq{\mmstr_{\scriptscriptstyle\neq}}
\DeclareMathOperator{\cov}{\textup{cov}}
\begin{document}

\begin{abstract}
I consider the natural infinitary variations of the games Wordle and Mastermind, as well as their game-theoretic variations Absurdle and Madstermind, considering these games with infinitely long words and infinite color sequences and allowing transfinite game play. For each game, a secret codeword is hidden, which the codebreaker attempts to discover by making a series of guesses and receiving feedback as to their accuracy. In Wordle with words of any size from a finite alphabet of $n$ letters, including infinite words or even uncountable words, the codebreaker can nevertheless always win in $n$ steps. Meanwhile, the \emph{mastermind number} $\mm$, defined as the smallest winning set of guesses in infinite Mastermind for sequences of length $\omega$ over a countable set of colors without duplication, is uncountable, but the exact value turns out to be independent of \ZFC, for it is provably equal to the eventually different number $\frak{d}({\neq^*})$, which is the same as the covering number of the meager ideal $\cov(\mathcal{M})$. I thus place all the various mastermind numbers, defined for the natural variations of the game, into the hierarchy of cardinal characteristics of the continuum.
\end{abstract}

\maketitle

\section{Infinite Wordle}

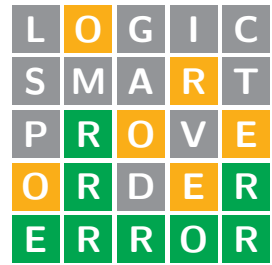
\begin{wrapfigure}{r}{.3\textwidth}\vskip-3ex\hfill
\begin{tikzpicture}[scale=.7,transform shape,every node/.style={white,font=\sffamily,text width=4.5mm,minimum height=4.5mm,inner sep=0pt,outer sep=0pt,very thick,line join=bevel,align=center,scale=2}]
\draw (0,5) \Wordle{L/gray,O/Orange,G/gray,I/gray,C/gray};
\draw (0,4) \Wordle{S/gray,M/gray,A/gray,R/Orange,T/gray};
\draw (0,3) \Wordle{P/gray,R/green,O/Orange,V/gray,E/Orange};
\draw (0,2) \Wordle{O/Orange,R/green,D/gray,E/Orange,R/green};
\draw (0,1) \Wordle{E/green,R/green,R/green,O/green,R/green};
\end{tikzpicture}
\captionsetup{style=rightside}
\caption{A play of Wordle}\label{Figure.Wordle}
\end{wrapfigure}
In the popular game of Wordle, going viral daily online, the codebreaking player aims to discover a secret codeword by making a series of guesses at it, receiving feedback at each stage concerning the accuracy of the letters in those guesses. Specifically, after each guessword is placed, as in figure~\ref{Figure.Wordle}, correct letters are highlighted in green and wrongly placed letters in yellow (limited by the total number of occurrences of that letter), while the remaining incorrect letters appear in gray. The standard game is played with five-letter words from a common English dictionary, but of course there are natural generalizations to longer words and other languages.

\enlargethispage{15pt}
I should like to consider the generalization of Wordle to dictionaries having much longer (perhaps imaginary) words and more generous alphabets of symbols.
$$\begin{tikzpicture}[scale=.35,transform shape,every node/.style={white,font=\sffamily,text width=4.5mm,minimum height=4.5mm,inner sep=0pt,outer sep=0pt,very thick,line join=bevel,align=center,scale=2}]
\draw (33,3.5) node[black,scale=2] {$\cdots$};
\clip (-1,1.5) -- (-1,5.5) -- (31.9,5.5) to[out=-90,in=90,looseness=1.6] (30.7,1.5) -- cycle;
\draw (0,5) \Wordle{S/Orange,U/gray,P/gray,E/Orange,R/gray,C/gray,A/Orange,L/Orange,I/Orange,F/Orange,R/gray,A/Orange,G/gray,I/green,S/green,T/Orange,I/Orange,C/gray, E/Orange,X/gray,P/gray,I/Orange,A/Orange,L/Orange,I/Orange,D/Orange,O/Orange,C/gray,I/Orange,O/Orange,U/gray,S/Orange,/Orange};
\draw (0,4) \Wordle{A/Orange,N/green,T/Orange,I/Orange,D/Orange,I/green,S/green,E/Orange,S/Orange,T/green,A/green,B/Orange,L/Orange,I/green,S/green,H/Orange, M/gray,E/Orange,N/Orange,T/green,A/Orange,R/gray,I/Orange,A/Orange,N/Orange,I/Orange,S/Orange,M/gray, /Orange,1/gray,2/gray,3/gray,/gray};
\draw (0,3) \Wordle{I/Orange,T/Orange, /Orange,W/gray,A/Orange,S/Orange, /Orange,A/Orange, /green,D/Orange,A/green,R/gray,K/Orange, /Orange,A/Orange,N/Orange,D/Orange, /green,S/Orange,T/green,O/Orange,R/gray,M/gray,Y/gray, /Orange,N/Orange,I/Orange,G/gray,H/Orange,T/Orange,./Orange,/Orange,/Orange};
\draw (0,2) \Wordle{O/green,N/green,E/green, /green,F/green,I/green,S/green,H/green, /green,T/green,W/gray,O/green, /green,F/Orange,I/Orange,S/Orange,H/Orange, /green,R/gray,E/Orange,D/Orange, /green,F/green,I/green,S/green,H/green, /green,B/Orange,L/Orange,U/gray,E/Orange,/Orange};
\end{tikzpicture}$$
Ultimately I shall place my main focus on \emph{infinite Wordle}, in which we play infinitely long words, that is, infinite sequences of letters chosen from a fixed countable alphabet $\Sigma$, using a fixed nonempty dictionary $\Delta\of{}^\omega\Sigma$ specifying exactly the set of allowed codewords and guesswords. One can similarly consider Wordle with uncountable words or indeed words indexed by any fixed index set $I$ with a corresponding dictionary $\Delta\of\Sigma^I$.

For any word length, whether finite or infinite, the game begins with a hidden codeword $w$ chosen from the dictionary, and the player then makes a series of candidate guesswords $s_0$, $s_1$, $s_2$, and so on, getting the Wordle feedback information for each guessword as to which letters are correctly placed, which letters are correct but misplaced, and which letters of the guessword cannot be made correct by rearrangement of the incorrect letters. To make things definite for the case where the guessword has repeated letters, we first color the correctly placed letters green, and then, amongst the misplaced letters, the first $k$ of them should be labeled yellow if there are indeed $k$ additional such letters appearing elsewhere in the codeword; all other letters remain gray. In this way, the player can learn information about how many instances of a misplaced letter might occur in the codeword. The codeword and all guesswords must be chosen from the fixed dictionary $\Delta$ of allowed words. In the infinite case, it may be natural to consider transfinite play, with guesses $s_\alpha$ at ordinal stage $\alpha$, not necessarily finite.

Wordle is not a game of perfect information, precisely because the codeword is hidden---if the codeword were known openly, the player could of course win on the first guess. What we mean by a Wordle strategy, therefore, is a guessing procedure that tells the codebreaker what word to guess next as a function of the feedback information received for the previously placed guesswords; such a strategy is winning, if for any codeword, the strategy leads at some stage to that word being placed as the guessword.

To my way of thinking, the main initial question for these long-form Wordle variations is---how long will it take the codebreaker to win?

\begin{question}
 In long-form Wordle with a dictionary of extremely long words, perhaps millions of letters in each word, will it also generally take the codebreaker a long time to win? In infinite Wordle, would it be unreasonable for the codebreaker to expect to win always at some finite stage? If words have some fixed uncountable size $\kappa$, can we express the length of the optimal winning times in terms of $\kappa$?
\end{question}

Let me draw out the answer, which I had initially found suprising. The first thing to notice is that in some trivial cases, for example, with the complete dictionary containing all possible sequences of letters as allowed words, the codebreaker can win fairly quickly, even when the words are infinite. Namely, by placing the constant-letter words in an alphabet of size $n$, beginning for example with AAAAA\dots A and then BBBBB\dots B and so on, the codebreaker can learn the true value of every cell in $n$ steps. Indeed, she need only use $n-1$ constant-letter words, since any letter place not yet indicated in green explicitly will be known to be the only remaining letter, and so the correct guess can be made at stage $n$. The conclusion is that with the complete dictionary on an alphabet of size $n$, the codebreaker can win within $n$ steps, even with extremely long or infinite words. This observation pours cold water on the expectation that extremely long words necessarily require a long play to win. But perhaps the dictionary does not allow these constant-letter words; could a devious dictionary prevent this easy win and somehow require very long play?

Consider the variation of Wordle known as \emph{Nerdle}, in which the ``words'' are exactly the valid mathematical equations that can be expressed using the symbols $1234567890{+}{-}{\times}{/}{=}$. This is actually an instance of Wordle in the generality I defined it, because we have simply specified a particular dictionary, the valid equations, over a particular alphabet of symbols, the symbols of basic arithmetic. The standard Nerdle rules call for equations of length $8$ (and also stipulate that there should be no operations appearing to the right of the equality sign), but let us consider equations of longer arbitrary fixed length, as in the sample play here:

$$\colorlet{g}{green}
\colorlet{p}{Orange}
\colorlet{b}{gray}
\begin{tikzpicture}[scale=.35,transform shape,every node/.style={white,font=\sffamily,text width=4.5mm,minimum height=4.5mm,inner sep=0pt,outer sep=0pt,very thick,line join=bevel,align=center,scale=2}]
\draw
++(0,-1) \Nerdle{7/b,7/b,7/b,7/b,7/b,7/b,7/b,7/b,7/b,7/b,7/b,=/p,7/b,7/b,7/b,7/b,7/b,7/b,7/b,7/b,7/b,7/b,7/b}
++(0,-1) \Nerdle{8/b,8/b,8/b,8/b,8/b,8/b,8/b,8/b,8/b,8/b,+/b,1/p,=/p,9/b,9/b,9/b,9/b,9/b,9/b,9/b,9/b,9/b,9/b}
++(0,-1) \Nerdle{3/b,3/b,3/b,3/b,3/b,3/b,3/b,-/b,2/g,2/p,2/g,2/p,2/g,2/p,2/g,=/p,1/p,1/b,1/b,1/b,1/b,1/b,1/g}
++(0,-1) \Nerdle{2/p,*/b,2/g,*/b,2/g,*/b,2/g,*/b,2/g,*/b,2/g,*/b,2/g,*/b,2/g,*/b,2/g,{/}/p,5/p,1/p,2/g,=/g,1/g}
++(0,-1) \Nerdle{5/g,1/g,2/g,{/}/g,2/g,{/}/g,2/g,{/}/g,2/g,{/}/g,2/g,{/}/g,2/g,{/}/g,2/g,{/}/g,2/g,{/}/g,2/g,{/}/g,2/g,=/g,1/g};
\end{tikzpicture}$$

Indeed, I propose that we consider Nerdle with significantly longer equations, perhaps millions of symbols in each equation or more. Will it take a correspondingly long time to win? The surprising answer is no! Regardless of how long the equations are, the codebreaker can always win within fifteen guesses.

\begin{theorem}\label{Theorem.Nerdle-in-fifteen}
 In the game of Nerdle played with equations of any fixed size, even with equations of millions of symbols or more, the codebreaker can win within fifteen guesses.
\end{theorem}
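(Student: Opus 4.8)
The key observation is that the Nerdle alphabet $\Sigma=\{0,1,2,3,4,5,6,7,8,9,+,-,\times,/,=\}$ has exactly $15$ symbols, so the target bound of fifteen is precisely the complete-dictionary bound of the observation above, now asserted to survive the passage to the far-from-complete dictionary of valid equations. The plan is therefore to imitate the constant-word elimination strategy. I will exhibit a family of valid equations, the \emph{probes}, with the property that at each of the $L$ positions (where $L$ is the fixed equation length) the probes collectively display at least $14$ of the $15$ symbols. Since a green response to a probe carrying symbol $\sigma$ in position $i$ certifies that the codeword has $\sigma$ there, while any non-green response at that position certifies that it does not, such a family determines the codeword position-by-position: at each position either some probe reports green, revealing the symbol outright, or none does, in which case the symbol must be the unique one not tested there. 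A single final guess then places the now-known codeword.

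First I would pin down the budget. As in the Wordle observation, testing only $14$ of the $15$ symbols per position suffices, the fifteenth being deduced by elimination; so fourteen probes plus one placing guess should realize the bound of fifteen, provided the probes can be chosen so that every position carries $14$ distinct symbols across them, equivalently so that no two probes agree in any position. This everywhere-different requirement, which anticipates the eventually different phenomenon driving the mastermind results later in the paper, is the heart of the matter.

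Next I would construct the probes explicitly, exploiting that for large $L$ a valid equation enjoys enormous freedom away from its few structurally constrained positions. For the ten digits I would use balanced numerical identities of the shape $\underbrace{d\cdots d}=\underbrace{d\cdots d}$, which carry the digit $d$ in every position except the equals sign. For the four operation symbols I would use iterated-operation identities such as $d+d+\cdots+d = N$, whose left expression carries $+$ in a fixed arithmetic progression of positions; varying the widths of the digit blocks shifts this progression, so a bounded number of such identities sweeps $+$ (and likewise $-$, $\times$, $/$) across all positions of the expression region. The equals sign itself, the two ends, and the handful of positions where the syntactic rules bite—no leading operator, no spurious leading zero, and a correctly balanced arithmetic value—are then handled by a small fixed set of dedicated probes.

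The main obstacle is the tightness of the accounting. Because fourteen probes force a Latin-rectangle pattern, each column exhibiting all fourteen of its symbols distinctly and hence the probes being pairwise everywhere-different, there is no slack to spare; in particular the naive balanced digit identities collide at the common equals position and must be staggered or supplemented. The real work will therefore lie in reconciling the digit identities, the shifted operation identities, and the boundary probes into exactly fourteen valid equations meeting the everywhere-different condition at all $L$ positions simultaneously, across the various cases for $L$ (its parity and size, and the placement of the forced positions), together with a by-hand check of the few small or exceptional lengths. I expect the amalgamation-without-collision step, rather than any single construction, to be the crux.
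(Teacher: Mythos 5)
Your plan replaces the paper's adaptive strategy with a non-adaptive one: fourteen fixed ``probe'' equations that are pairwise everywhere-different, followed by one placing guess. The gap is that this probe family does not exist in general, and the obstructions are not mere bookkeeping. As you state it---every position displays at least $14$ of the $15$ symbols---the requirement is outright unachievable: a valid equation cannot begin with ${+}$, ${\times}$, ${/}$ or ${=}$, so position $0$ can only ever display the handful of symbols that are syntactically legal there; more seriously, every valid equation has an equals sign at one of the at most $L-2$ interior positions, so whenever $L\le 15$ the pigeonhole principle forces two of your fourteen probes to share an equals position, destroying the everywhere-different property. Since the theorem is asserted for equations of \emph{any} fixed size (including the standard length $8$), no construction of this kind can prove it. Your own candidate probes illustrate the difficulty: the balanced identities $d\cdots d = d\cdots d$ all have the equals sign forced at the exact middle (balance determines its position, so they cannot be ``staggered''), hence at that position your ten digit probes test only the symbol ${=}$, leaving the deduction indeterminate among ten untested symbols. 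Relaxing the requirement to ``cover all but one \emph{feasible} symbol at each position'' is the right repair in principle, but then you must actually produce the design---a Latin-rectangle-like family of valid equations respecting all the syntax---and you concede this amalgamation step is open; it is the entire content of the proof, not a final verification.

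The idea you are missing is the one that makes the paper's proof short and uniform in the dictionary: play \emph{adaptively}, and let the hidden codeword itself witness that the next guess exists. At each stage the codebreaker needs some valid equation that agrees with all green tiles and shows, at each non-green tile, a symbol not yet seen at that tile; such an equation exists because the true codeword has exactly this property. No explicit equations are ever constructed, no syntactic analysis of Nerdle is needed, and since each non-green tile can see a new symbol at most fifteen times, the win comes within fifteen guesses for every dictionary, every alphabet of that size, and every word length (this is the content of Theorem~\ref{Theorem.Wordle-finite-alphabet}). Adaptivity is not a convenience here: as the length-$8$ pigeonhole obstruction shows, the nonconstructive, feedback-dependent existence step is precisely what cannot be replaced by a fixed family of probes.
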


\begin{proof}
Let the codebreaker play any valid equation as the first guess. Perhaps some of the tiles turn green, while others remain yellow or gray. As a second guess, let the codebreaker place an equation that agrees on all the green tiles, but uses a new symbol on every non-green tile. Why is there a valid equation like that? Because the true codeword equation has this feature. With this second guess, perhaps some additional tiles turn green. For the third guess, therefore, let the codebreaker again place an equation that agrees on all the green tiles and at each non-green tile uses a new symbol not yet seen on that tile. Why is there an equation like that? Because the true codeword equation has that property. Continue in this way at each stage, placing an equation that agrees on the all the green tiles, and uses a new symbol not yet seen at each non-green tile.

The main observation is that this strategy will find the true equation within fifteen stages, since at each stage, the remaining non-green tiles use a new symbol not yet appearing in that position. But there are only fifteen symbols in the alphabet, $1234567890{+}{-}{\times}{/}{=}$, and so this can happen at most fifteen times. So the codebreaker will win with at most fifteen guesses, regardless of the length of the equations.
\end{proof}

Of course the argument has nothing to do with Nerdle as opposed to Wordle. The point is that all that matters is the total number of symbols in the alphabet---the word length is ultimately unimportant, and it is simply not true that much longer words will necessarily require a much larger number of guesses to win. The proof works with Wordle for words of any size, even infinite Wordle, even if the words are uncountable.\goodbreak

\begin{theorem}\label{Theorem.Wordle-finite-alphabet}
 In the game of Wordle using words of any given size (including infinite words, even uncountable words) over a finite alphabet of $n$ letters, and any fixed nonempty dictionary, the codebreaker has a strategy that will win within $n$ steps. Furthermore, there is such a strategy for which successive guesses remain consistent with all previous feedback color information, and also one that uses only the green feedback information.
\end{theorem}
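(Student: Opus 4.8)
The plan is to adapt the Nerdle argument from Theorem~\ref{Theorem.Nerdle-in-fifteen} almost verbatim, since that proof made essential use only of the finiteness of the alphabet and never of the word length. First I would describe the strategy: the codebreaker plays any word from the dictionary $\Delta$ as the initial guess $s_0$, receiving feedback that turns some tiles green. At each subsequent stage, the codebreaker places a word that \emph{agrees with the codeword on every green tile} (that is, on every position already confirmed correct) and that uses, at every non-green position, a letter not yet seen at that position in any previous guess. The crucial point requiring justification is that such a guessword exists in the dictionary $\Delta$ at every stage; I would handle this exactly as in the Nerdle proof, by observing that the true hidden codeword $w$ itself has this property. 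Indeed, $w$ agrees with itself on the green tiles, and since at each non-green position only finitely many letters have been tried so far and $w$'s letter there has never turned green (by definition of non-green), the codeword $w$ witnesses that at least one consistent dictionary word exists, so the strategy is well-defined.

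The termination analysis is the heart of the argument. I would argue that at each stage, every position is either already green or else displays, on the current guess, a fresh letter never before placed in that position. Fix any position $i$. Across the successive guesses, the letter the codebreaker places at position $i$ is always new until that position turns green, after which it stays fixed at the correct value. Since the alphabet has only $n$ letters, position $i$ can exhibit at most $n$ distinct letters before it is forced to show the correct one, at which point it must be green. Hence after at most $n$ guesses every position has turned green, meaning the guessword equals the codeword. As in the Nerdle proof, one can sharpen this to $n$ steps total, and indeed note that once $n-1$ wrong letters have been eliminated at a position the remaining letter is forced, so the codebreaker need never waste a guess. The uncountability of the index set $I$ poses no obstacle here, because the counting bound $n$ is applied position-by-position and is uniform across all positions simultaneously; there is no need to enumerate positions or to take any limit.

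For the two refinements in the ``Furthermore'' clause, I would observe that the strategy just described already has both features, or can be easily modified to have them. The strategy \emph{uses only the green feedback information}: notice that the guessing rule above refers only to which tiles are green and which letters have previously been tried at each position, never to the yellow/gray distinction, so this same strategy ignores all non-green feedback. To obtain a strategy whose \emph{successive guesses remain consistent with all previous feedback}, I would have the codebreaker additionally respect the yellow and gray information --- for instance, by always selecting, among dictionary words agreeing on the green tiles, one that is genuinely consistent with every prior feedback pattern (such a word exists because $w$ is always consistent). One should check that insisting on full consistency does not interfere with the fresh-letter requirement, and the same witness $w$ shows that a word which is both consistent and uses a new letter at each non-green tile exists, so the termination count of $n$ is unaffected.

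The step I expect to be the main obstacle --- though it is more a matter of careful bookkeeping than genuine difficulty --- is verifying that the ``new letter at every non-green tile'' requirement can be met \emph{simultaneously} at all (possibly uncountably many) positions by a single dictionary word, and that the codeword $w$ always serves as the needed witness. The subtlety is that greenness is determined by comparison with $w$, so one must confirm that a tile is non-green precisely when the previously guessed letters there all differ from $w$'s letter, guaranteeing $w$ still contributes a ``fresh'' letter there; this is immediate from the feedback rules but deserves an explicit sentence. Once that witnessing claim is in place, the per-position pigeonhole bound of $n$ transfers directly to a global bound, completing the proof.
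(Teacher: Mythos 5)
Your proposal is correct and follows essentially the same route as the paper's own proof: the identical strategy (agree on all green tiles, place a never-before-seen letter at each non-green tile), the identical witness argument that the hidden codeword $w$ itself certifies such a dictionary word exists, and the identical per-position pigeonhole count giving the bound of $n$ guesses. Your handling of the ``Furthermore'' clause---noting the strategy already uses only green feedback, and that $w$ also witnesses a guess consistent with all yellow/gray information---likewise matches the paper's treatment.
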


\begin{proof}
Let the codebreaker propose any initial guessword $s_0$ from the dictionary. The green feedback tells her which of these letters is correct. As a second guess, let the codebreaker place a word $s_1$ that agrees in exactly those green cells, and uses a new letter in all the non-green cells (and also conforms with all the yellow information). Why is there such a codeword? Because the true codeword itself is like that. And so on. At each stage, she will have some information about which letters are correct, and which places have not yet been seen to be correct, and she should place a next word that agrees with all the green cells and for each non-green cell uses a new letter not yet seen in that cell. There is such a word, because the true codeword is like that. After $n-1$ steps, she will thereby have complete information about the codeword, since for every place, either she has seen a green letter in that place, or she has seen $n-1$ non-green letters in that place, and so there is only one remaining possibility consistent with that information. So the codebreaker will be able to win by stage $n$. (Note: this same argument was also discovered independently by Lokshtanov and Subercaseaux \cite[lemma~3]{Lokshtanov2022:wordle-is-np-hard}.)
\end{proof}

In the infinite case, the result here applies for example to infinite Wordle with trinary words of length $\omega_1$ using any dictionary $\Delta\of{}^\omega\set{0,1,2}$---any such game can be won within three guesses.

To what extent does the phenomenon extend to Wordle with infinite alphabets? In the general case of an alphabet $\Sigma$ with at least two letters, then there will be uncountably many infinite words in the word space $^\omega\Sigma$, and of course the dictionary $\Delta$ might be likewise uncountable. In the special case of a countable dictionary, however, then the codebreaker can play so as to win at a finite stage of play.

\begin{observation}\label{Observation.Wordle-countable-dictionary}
In infinite Wordle with a dictionary of only countably many words, the codebreaker can play so as always to win at a finite stage of play.
\end{observation}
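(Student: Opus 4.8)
The plan is to exploit the countability of the dictionary by fixing an enumeration $\Delta = \{w_0, w_1, w_2, \ldots\}$ and having the codebreaker always guess the least-indexed word that remains consistent with all the feedback received so far. Here I call a word $v \in \Delta$ \emph{consistent} with the feedback obtained from a guess $g$ if guessing $g$ against the hypothetical codeword $v$ would have produced exactly that same feedback, and I call $v$ consistent with the play so far if it is consistent with the feedback of every guess made. Since the consistent words form a subset of $\Delta$, each such guess is a legal dictionary word, and since the true codeword $w$ is always consistent with its own feedback, the set of consistent words is never empty, so the strategy is always well defined.

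First I would establish the key self-elimination property: if the guess $g$ is not the codeword $w$, then the feedback for $g$ is not all green, because $g$ and $w$ differ at some coordinate and that coordinate cannot be colored green. Consequently $g$ is inconsistent with its own feedback, since guessing $g$ against $g$ would have produced an all-green response. In other words, every non-winning guess permanently removes itself from the pool of consistent words; and consistency is monotone, in that once a word falls out of the consistent pool it can never re-enter.

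Next I would track the index of the guessed word. Let the codeword be $w = w_k$, taking $k$ least if the enumeration has repetitions. At every stage $w_k$ is consistent, so the least-indexed consistent word has index at most $k$. Whenever the codebreaker makes a non-winning guess $w_m$ with $m < k$, the self-elimination property removes $w_m$ from the consistent pool, while every word of index below $m$ was already inconsistent, being not the chosen minimum at that stage. Hence the index of the least-indexed consistent word strictly increases from one guess to the next. A strictly increasing sequence of indices bounded above by $k$ must reach $k$ within at most $k+1$ steps, at which point the codebreaker guesses $w_k = w$ and wins.

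The argument is short, and the only point requiring care is the self-elimination observation, which is where the specific Wordle feedback rules actually enter; everything else is bookkeeping on a well-ordered countable pool. I would emphasize that this yields a finite win for every individual codeword without any uniform bound across codewords—the codeword $w_k$ may take up to $k+1$ guesses—so there is no tension with the fact that a countable dictionary can still be arbitrarily rich, and transfinite play is never needed.
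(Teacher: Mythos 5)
Your proof is correct, but it is more elaborate than the paper's argument, which dispenses with feedback entirely: the paper simply fixes an enumeration of the countable dictionary and plays the words in that order, winning at the finite stage where the true codeword appears in the list. Your refinement---guessing the least-indexed word still consistent with all feedback---requires the extra self-elimination lemma (a wrong guess can never be consistent with its own non-all-green feedback) and the monotonicity of the consistent pool, but in exchange it buys something the blind enumeration does not: every guess conforms to all previously received color information, and words already refuted by feedback are skipped, so the win comes no later (and typically sooner) than in the paper's strategy. This consistency property is one the paper itself values elsewhere (it is singled out as a ``furthermore'' clause in Theorem~\ref{Theorem.Wordle-finite-alphabet} and in the discussion following Theorem~\ref{Theorem.Wordle-win-by-omega}), so your argument can be read as establishing a slightly stronger statement: the codebreaker can win at a finite stage while playing in ``hard mode.'' Both proofs share the same essential engine---the codeword sits at some finite index $k$, and progress through the enumeration is monotone and bounded by $k$---so the difference is one of execution rather than of underlying idea.
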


\begin{proof}
This observation was also made by Toby Ord. The codebreaker can simply fix an enumeration of the dictionary and play the words in that order. Since the true codeword is one of the words in the dictionary, eventually there will come a stage of play at which the guessword is fully correct.
\end{proof}

The countability of the dictionary is not a necessary condition for winning at finite stages, however, in light of the earlier arguments with the full dictionary over a finite alphabet. Let us next consider the case of a countably infinite alphabet, but where every word in the dictionary uses only finitely many letters.

\begin{theorem}
If $\Delta$ is an infinite Wordle dictionary on a countable alphabet $\Sigma$ and every word in the dictionary uses only finitely many letters, then the codebreaker has a strategy that will always win at some finite stage of play.
\end{theorem}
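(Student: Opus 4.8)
The plan is to reduce to the finite-alphabet case, Theorem~\ref{Theorem.Wordle-finite-alphabet}, by searching through larger and larger finite sub-alphabets. Enumerate the alphabet as $\Sigma=\set{a_0,a_1,a_2,\ldots}$, and for each $m$ let $\Sigma_m=\set{a_0,\ldots,a_m}$ and $\Delta_m=\set{u\in\Delta : u \text{ uses only letters from } \Sigma_m}$. The central point is that although the codebreaker does not know in advance how many distinct letters the hidden codeword $w$ uses, she knows it uses only finitely many, and so $w\in\Delta_M$, where $M$ is the (unknown) largest index of a letter occurring in $w$.

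First I would run, for each $m$ in turn, the finite-alphabet winning strategy of Theorem~\ref{Theorem.Wordle-finite-alphabet} for the game with dictionary $\Delta_m$ over the size-$(m+1)$ alphabet $\Sigma_m$. Every guess that strategy issues lies in $\Delta_m\of\Delta$, so it is a legal guess in the original game, and the feedback received in the real game---comparing the guess with $w$---is exactly the feedback that strategy expects whenever $w$ actually lies in $\Delta_m$. Consequently, whenever $w\in\Delta_m$, that strategy wins within $m+1$ guesses. The codebreaker runs phase $m$ for at most $m+1$ guesses; if she has not won by then, she moves on to phase $m+1$, restarting the finite-alphabet strategy afresh and ignoring the earlier feedback.

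The key step is the stopping-and-continuation criterion. By the contrapositive of the finite-alphabet theorem, if phase $m$ does not produce a win within its $m+1$ allotted guesses, then $w\notin\Delta_m$; and if at some point in phase $m$ there is simply no word of $\Delta_m$ agreeing with the green cells seen so far and using a fresh letter in each non-green cell---so the strategy has no legal move---then again $w\notin\Delta_m$, since the true codeword would have witnessed such a word were it in $\Delta_m$. In either case the codebreaker safely advances to the next sub-alphabet. Because $w\in\Delta_M$ for the finite value $M$ above, and each earlier phase terminates after finitely many guesses, phase $M$ is eventually reached and wins within $M+1$ guesses; the total number of guesses is at most $\sum_{m=0}^{M}(m+1)$, which is finite.

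The main obstacle to watch is the interface between phases when $w\notin\Delta_m$. One must check that running a strategy designed for the dictionary $\Delta_m$ against a codeword lying outside $\Delta_m$ cannot accidentally declare victory---and it cannot, since every guess lies in $\Delta_m$ and hence differs from $w$---and that the ``no legal move'' situation is correctly read as the fact $w\notin\Delta_m$ rather than as a defect of the strategy. Handling these two degenerate cases cleanly, rather than any hard combinatorics, is where the care is needed; the rest follows immediately from the finite-alphabet theorem together with the finiteness of the letter-support of $w$.
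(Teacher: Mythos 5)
Your proposal is correct and follows essentially the same route as the paper: enumerate finite subalphabets, run the finite-alphabet strategy of Theorem~\ref{Theorem.Wordle-finite-alphabet} for each in turn, and note that the codeword's finite letter-support guarantees some phase wins. Your use of an increasing chain $\Sigma_0\subset\Sigma_1\subset\cdots$ rather than all finite subalphabets is an inessential variation, and your explicit handling of the degenerate cases (phase timeout, no legal move, no accidental win) just makes precise what the paper leaves implicit.
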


\begin{proof}
There are only countably many finite subalphabets $\Sigma_0\of\Sigma$, and for each such subalphabet, theorem \ref{Theorem.Wordle-finite-alphabet} provides a strategy to win in $|\Sigma_0|$ many steps, if the codeword should happen to come from that subalphabet. So let the codebreaker simply attempt all those strategies one after the other. Since the codeword must arise from one such finite subalphabet, eventually the codebreaker will be making an attempt for the relevant subalphabet, and will therefore win by the conclusion of that series of stages.
\end{proof}\goodbreak

A more general observation is the following:

\begin{theorem}\label{Theorem.Wordle-countable-unions}
If an infinite Wordle dictionary $\Delta$ can be expressed as a countable union $\Delta=\Union_n\Delta_n$ of dictionaries $\Delta_n\of{}^\omega\Sigma$, and for each $\Delta_n$ the codebreaker has a strategy always winning at a finite stage, then the codebreaker has a strategy to win with $\Delta$ always at a finite stage.
\end{theorem}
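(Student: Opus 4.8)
The plan is to run all of the given strategies simultaneously by dovetailing, advancing each one infinitely often. Unlike the situation in Theorem~\ref{Theorem.Wordle-finite-alphabet} and the subalphabet argument preceding this theorem, where each strategy came with a \emph{uniform} finite bound on its winning time and so could be run to completion one after another, here each $\Delta_n$-strategy is only guaranteed to win at \emph{some} finite stage, with no bound known in advance. We therefore cannot simply play the strategies consecutively---we would never know when to abandon one and move on to the next. Interleaving is the natural remedy.

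So fix, for each $n$, a strategy $\sigma_n$ that wins the game with dictionary $\Delta_n$ at a finite stage whenever the codeword lies in $\Delta_n$. The codebreaker maintains a separate simulation of each $\sigma_n$, each carrying its own private record of the guess-feedback pairs it has so far produced. To \emph{advance} simulation $n$ by one step, the codebreaker asks $\sigma_n$ for its next guessword on the basis of that simulation's recorded history, plays that word as an actual guess in the real game for $\Delta$ (this is legal, since $\sigma_n$'s guesses come from $\Delta_n\of\Delta$), receives the real Wordle feedback, appends the new guess-feedback pair to simulation $n$'s record, and checks whether the feedback reports a full match. The codebreaker now dovetails these advancement tasks in the standard way, so that every simulation $n$ is advanced infinitely often.

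The crucial observation is that each simulation is a \emph{faithful} play of $\sigma_n$ against the true codeword $w$ in the game for $\Delta_n$. This is because the Wordle feedback for a guessword depends only on that guessword and the codeword---not on the dictionary, and not on any other guesses that may have been interleaved with it. Thus the feedback that simulation $n$ receives is exactly the feedback $\sigma_n$ would receive in a genuine solo play against $w$, and so simulation $n$ reproduces such a play step for step.

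Finally, since $\Delta=\Union_n\Delta_n$, the true codeword $w$ belongs to some $\Delta_n$. By the faithfulness just noted, the simulation of $\sigma_n$ is a legitimate play of $\sigma_n$ against a codeword in $\Delta_n$, so by hypothesis it produces $w$ as a guess after some finite number $k$ of its own steps. Because the dovetailing advances simulation $n$ infinitely often, simulation $n$ will have been advanced at least $k$ times after only finitely many total moves in the real game, at which point the codebreaker has played $w$ and won. The main point to verify is precisely this faithfulness of the interleaved simulations---once that is in hand, the dovetailing delivers the win at a finite stage.
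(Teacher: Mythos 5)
Your proof is correct and takes essentially the same approach as the paper, which likewise interleaves the strategies (scheduling the $k$th guess of the $\Delta_n$-strategy at stage $2^n(2k+1)$, consulting only the feedback from that strategy's own earlier guesses). Your explicit check that Wordle feedback depends only on the guessword and the codeword---so each interleaved simulation is a faithful solo play---is exactly the point the paper leaves implicit.
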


\begin{proof}
The codebreaker can interleave the strategies. Namely, at stage $2^n(2k+1)$ let the codebreaker place the $k$th guess in the $\Delta_n$ strategy, consulting the feedback from the earlier stages $2^n(2k'+1)$ for $k'<k$. In this way, every strategy will become part of the ultimate play, but the true codeword is in some $\Delta_n$, and so that part of the play will win at some finite stage.
\end{proof}

In the general case of an arbitrary dictionary on an infinite alphabet, it may seem natural to allow the game play to proceed transfinitely, with the codebreaker playing a candidate guess word $s_\alpha$ at ordinal stages $\alpha$. When the argument of theorem \ref{Theorem.Wordle-finite-alphabet} is applied with an infinite alphabet, then at any given countable stage, one might still have some non-green squares, since the strategy of that proof only calls for the next guess to agree on all the green tiles and use a new not-yet-used-in-that-place symbol on the non-green tiles. But at a countable ordinal stage, this won't necessarily yet determine the correct letter for such a place, since there could be several letters left, even if we have already used infinitely many. Nevertheless, when the alphabet $\Sigma$ is countable, I claim, then regardless of the dictionary, the codebreaker can win by $\omega$ at the latest.

\begin{theorem}\label{Theorem.Wordle-win-by-omega}
For any dictionary in infinite Wordle with words of length $\omega$ over a countable alphabet, there is a uniform countable sequence of guesswords that will enable the codebreaker in infinite Wordle to correctly guess the codeword at stage $\omega$ or earlier.
\end{theorem}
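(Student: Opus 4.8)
The plan is to use a single fixed sequence of guesswords, chosen from the dictionary and \emph{not} depending on the hidden codeword, and then to reconstruct the codeword entirely from the green feedback at stage $\omega$. The central difficulty, relative to the finite-alphabet argument of Theorem~\ref{Theorem.Wordle-finite-alphabet}, is the dictionary constraint: with an infinite alphabet the ``use a new symbol'' strategy need never terminate at a finite stage, and more basically the codebreaker cannot design a probe word cell-by-cell, placing whatever letter she likes in a given position, since the resulting word might not lie in $\Delta$. The key realization that dissolves this is that any letter actually occurring at a given position of some allowed word is already witnessed by a legal word, so dictionary words themselves can serve as the probes.

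Concretely, for each position $i\in\omega$ let $A_i=\set{a\in\Sigma\st \exists w'\in\Delta\ w'(i)=a}$ be the set of letters appearing in position $i$ of some allowed word. Since $\Sigma$ is countable, each $A_i$ is countable, and hence the set of pairs $\set{(i,a)\st i\in\omega,\ a\in A_i}$ is countable. For each such pair I would fix a witnessing word $w_{i,a}\in\Delta$ with $w_{i,a}(i)=a$, and let $S$ be the resulting countable collection of witnesses, enumerated as $s_0,s_1,s_2,\ldots$. This sequence depends only on $\Delta$, so it is uniform in the required sense, and the codebreaker simply plays $s_0,s_1,s_2,\ldots$ at the finite stages of play.

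I claim the green feedback alone then pins down the codeword. Let $w\in\Delta$ be the hidden codeword and fix any position $i$. Then $w(i)\in A_i$, so $S$ contains the witness $w_{i,w(i)}$, which agrees with $w$ in position $i$; playing this word therefore yields a green tile at position $i$, revealing that $w(i)$ equals the known letter placed there. Because a green tile always signals exact agreement with the codeword, every green the codebreaker ever sees at position $i$ reports the true value $w(i)$. Thus once all the finite-stage guesses have been placed, she knows $w(i)$ for every $i$: for each $i$ she reads $w(i)$ off from any guess that greened position $i$, and at least one such guess---namely $w_{i,w(i)}$---exists. At stage $\omega$ she assembles these values into the word $w$ and plays it; since $w\in\Delta$ this is a legal guess and it is correct, so she wins at stage $\omega$ (or earlier, should some $s_n$ already happen to equal $w$).

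The only genuine obstacle is the one flagged above---arranging that the probes are legal dictionary words while still covering every position with every value that can occur there---and it is resolved by drawing the probes from $\Delta$ itself, after which the countability of $\omega\times\Sigma$ makes a countable covering family automatic. It is worth remarking why $\omega$, rather than a finite stage, is the correct bound: when the codeword uses infinitely many distinct letters from an infinite alphabet, at any finite stage there will in general remain positions not yet greened, so the reconstruction genuinely requires the completed countable sequence of guesses. This is exactly why the argument delivers a win by $\omega$ and no sooner, matching the statement.
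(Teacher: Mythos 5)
Your proposal is correct and is essentially identical to the paper's own proof: both fix, for each position--letter pair $(i,a)$ witnessed in the dictionary, a single word of $\Delta$ showing $a$ in position $i$, play this countable family uniformly at the finite stages, and recover the codeword from the green feedback alone at stage $\omega$. The paper's write-up is more terse, but the underlying argument, including the reliance on only green information and the uniformity of the guess sequence, is the same.
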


\begin{proof}
For each letter position, there are only countably many letters that could appear in that position. For each such letter/position combination, if there is a word in the dictionary with that letter appearing in that position, then let the codebreaker simply select one such word with that feature. This will altogether form a list of countably many guesswords, which the codebreaker can guess in sequence. By stage $\omega$, the codebreaker will therefore know of each position which letter appears in it (and this requires only the green feedback information, not the yellow), and therefore has complete knowledge of the codeword at stage $\omega$.
\end{proof}

We can alternatively mount a modified version of the argument of theorem \ref{Theorem.Wordle-finite-alphabet} here. Namely, the codebreaker could make any first guess $s_0$. Some of the places may now show green. Since the codeword is correct for exactly these places, we can restrict attention to the dictionary words that agree with those green letters, and display different letters in all other places. Furthermore, for the first non-green cell, there will be a word like that whose letter in that cell is earliest in the enumeration of the alphabet. So play such a word as $s_1$. Similarly at any finite stage, the codebreaker should play a word from the dictionary that agrees with all the places currently known to be green, and at the leftmost non-green cell, has the smallest possible letter among all such words. In this way, we shall eventually come to know the letter in that leftmost cell, and so after $\omega$ many steps we shall know all the cells. So we can guess the correct codeword at stage $\omega$. This idea shows that there is a winning strategy with the added attractive feature that successive guesses all conform with the feedback information known at that stage, and furthermore, the guesses made at finite stages converge bit-wise to the true codeword, which is the limit of those guesses.

\begin{question}
  For which dictionaries $\Delta\of{}^\omega\Sigma$ in a countably infinite alphabet $\Sigma$ can the codebreaker have a strategy always to discover the codeword at a finite stage of play?
\end{question}

I am not quite sure of the answer. I have given some sufficient conditions, and let me prove now that in the complete dictionary over an infinite alphabet, the codebreaker cannot force a win at a finite stage of play.

\begin{theorem}\label{Theorem.Wordle-not-finite-stage}
 In infinite Wordle with the complete dictionary $\Delta$ having all words of length $\omega$ using an infinite alphabet $\Sigma$, the codebreaker has no strategy of guesswords that will ensure a win at a finite stage of play.
\end{theorem}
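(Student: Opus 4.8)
The plan is to argue by contradiction through an adversary (codemaker) construction: fix any purported finite-stage winning strategy for the codebreaker and build a single codeword against which the strategy never produces a fully correct guess. The resource that makes this possible—and which fails in the finite-alphabet setting of Theorem~\ref{Theorem.Wordle-finite-alphabet}—is that after any finite number of guesses $s_0,\dots,s_m$ only the finitely many symbols $s_0(j),\dots,s_m(j)$ have ever appeared in a given position $j$; since $\Sigma$ is infinite, a fresh symbol is always available at each position. This lets the codemaker keep coordinates non-green for all guesses seen so far, so that no guess is ever forced to agree with the codeword in every place.

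Concretely, I would simulate the play stage by stage. At stage $m$ the strategy computes $s_m$ from the feedback reported for the earlier guesses; the codemaker then commits the codeword $w$ to differ from $s_m$ at a newly chosen coordinate, placing there a symbol distinct from $s_m$'s value, which guarantees $s_m\neq w$. To keep the reported feedback tractable, I would maintain that $w$ uses each symbol of $\Sigma$ infinitely often, so that every misplaced tile is yellow and the feedback for $s_m$ is determined entirely by its green tiles; arranging in addition that $w$ agrees with each guess in only finitely many places—an eventual-difference condition, again affordable because the infinite alphabet always offers a disagreeing value—makes each green set finite and contained in the already-committed part of $w$, so the feedback can be issued honestly at stage $m$. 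The limit of these commitments would then be a codeword $w$ whose genuine play reproduces exactly the simulated guesses and feedback while never being guessed, contradicting finite-stage winning.

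The hard part, and the step I expect to be the main obstacle, is the genuinely global character of Wordle's yellow/gray distinction: a misplaced letter is yellow only if that letter occurs, unmatched, somewhere in the \emph{entire} codeword, so the feedback for $s_m$ can depend on coordinates of $w$ not yet committed—yet committing all of $w$ in advance would surrender the flexibility needed to dodge later guesses. The ``each symbol infinitely often'' device tames this for ordinary guesses but interacts badly with a guess that uses a single symbol densely, such as a constant guess ${}^\omega c$, whose green set against such a $w$ would be infinite and spread through the undetermined tail. I would therefore not fix $w$ outright but instead maintain a large, perfectly branching family of candidate codewords consistent with all feedback so far, refining it at each stage so that the new guess receives a determinate color pattern across the whole family while at least one coordinate is pruned to exclude the guess; because pruning one forbidden value at a coordinate over an infinite alphabet still leaves infinitely many, the family stays perfect and has nonempty intersection. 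Showing that the feedback can always be made constant on such a family—the delicate case being exactly the densely-repeating guesses—is where the real work lies.
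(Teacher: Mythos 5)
Your overall plan---an adversary simulation in which the codemaker makes only finitely many commitments at each stage and lets the codeword crystallize in the limit---is exactly the paper's approach, and you have correctly located the crux: the yellow/gray feedback is global, so the colors reported for $s_m$ must be made determinate by finite commitments, and guesses that use one symbol densely (e.g.\ constant or cofinally constant guesses) are the problem case. But your proposal does not resolve this crux, and its two announced devices are mutually inconsistent: you cannot have $w$ use every symbol of $\Sigma$ infinitely often \emph{and} have $w$ agree with every guess in only finitely many places, since against the constant guess ${}^\omega c$ the first condition forces infinitely many greens while the second forbids them. You acknowledge the clash and retreat to a perfect-family scheme in which the feedback is to be ``made constant across the family,'' but you explicitly leave the verification of that---precisely in the densely-repeating case---as ``where the real work lies.'' That unproved step is the entire content of the theorem; as written, the proposal is a correct identification of the obstacle rather than a proof.

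The paper's resolution is the opposite of your ``each symbol infinitely often'' device: the codeword is promised to be \emph{injective}---each letter used exactly once, with every letter appearing in any guessword eventually included somewhere. Injectivity makes the global yellow/gray data decidable from finite information: each letter of a guessword can earn at most one yellow (its first unmatched occurrence), everything else is gray, so honest feedback follows from the finite green commitments plus the standing injectivity-and-inclusion promise. The dense-guess problem is then handled not by avoiding agreement but by conceding it in a controlled way: whenever some letter has been mentioned, cumulatively over all guesses so far, at all but finitely many positions, the codemaker marks a single occurrence of it green; at stage $n$ at most $n+1$ letters can have been cofinally mentioned (each position has seen at most $n+1$ letters), so only finitely many greens are ever conceded and no guess is ever fully green. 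Two further bookkeeping moves (commit the leftmost uncommitted cell to a fresh letter; place the next unplaced guessed letter at one of the infinitely many positions where it has never been guessed) make the promises converge to an actual codeword consistent with all the feedback given. If you try to rescue your perfect-family version, you will find that forcing the feedback to be constant across the family against constant guesses drives you to exactly this kind of injectivity-plus-conceded-green commitment, so the cleanest repair of your argument is to adopt it outright.
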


\begin{proof}
Suppose toward contradiction that the codebreaker has a strategy enabling her to find any given codeword in finitely many steps. Let me exhibit a codeword and a play of the game using that codeword, which accords with the strategy, but does not find it at any finite stage of play. To do this, I shall embark on a play of the game without fixing a specific codeword in advance, but rather I shall make increasing promises about it as the play unfolds. The codeword I ultimately create will use its letters each exactly once, and these letters will include every letter appearing in any of the guesswords during the play of the game, while remaining consistent with all the answers provided during the play.

The strategy begins with a guessword $s_0$. Let me mark the first appearance of every letter yellow, except if a letter appears cofinitely often, that is, on all but a finite number of positions; in this case, for that letter, I shall mark one of those occurrences green. At some finite stage of play, there have been finitely many guesswords $s_0,s_1,\ldots,s_n$, with the new guessword $s_n$ just placed. Assume inductively that I have made only finitely many green promises so far about definite locations of letters in the codeword, together with the ongoing yellow promises that every letter will appear exactly once. If it happens at stage $n$ that there is a letter that has now for the first time been mentioned in the guesses up to this stage at all but finitely many places, then this letter must appear infinitely often in $s_n$ at places in which it did not appear earlier, and I may select one of those places to mark green. Notice that there can be at most $n+1$ many letters with this property, since each position has had at most this many letters mentioned so far, and so I shall make only finitely many green promises at this stage for this reason. Next, for the leftmost cell about which I have not yet committed myself, I may select a letter not yet appearing in that position and not yet promised for green at another location, and promise to myself that the codeword will have that letter in that position. Finally, for the next letter about which I have not yet determined its place, then there must be infinitely many places for which it has not yet appeared at that position in any of the guesswords so far (since otherwise we handled it above), and I may promise to myself that the codeword will have that letter in that place. All other letters in $s_n$ are marked yellow for the first occurrence, unless I have already made a commitment about that letter in the other dealings.

In this way, the promises I make about the ultimate codeword will remain consistent with the answers I have given, while revealing only finitely many definite bits of the codeword at any stage. Since I made sure for each cell eventually to commit to a particular letter for that cell and for every letter to commit to a particular place for that letter, the promises will converge ultimately to a unique codeword. Thus, the overall play is coherent with respect to that codeword, but the strategy did not succeed at any finite stage in discovering it.
\end{proof}

\section{Infinite Absurdle}

The proof technique used in the proof of theorem \ref{Theorem.Wordle-not-finite-stage} suggests a competitive process between the codebreaking player and a codemaker striving to delay the win as long as possible. And indeed, this is the precisely the pattern of game play of the extremely interesting variation of Wordle known as \emph{Absurdle}, a two-player variation played between the codebreaker and a codemaking player called the absurdist. Namely, the codebreaker plays Wordle as usual, while the absurdist gives the Wordle feedback as the game proceeds, but does not commit to a definite codeword in advance. Rather, the absurdist can in effect change his mind about what the codeword is as play proceeds, provided that the new codeword remains consistent with all previous feedback answers. It may be best to imagine that the absurdist is thinking at each stage of a set of possible codewords, each of them consistent with the answers given thus far, and he is striving to provide answers to new queries in such a way that gives as little information as possible. Play proceeds transfinitely, and the codebreaker wins if the guessword is the only word in the dictionary consistent with the answers given so far or if at some stage of play there is no codeword in the dictionary that is consistent with all answers given thus far.

Precisely because there is no longer a hidden codeword, Absurdle is a game of perfect information, and thus the entire theory of games of perfect information comes into force. The case of infinite Absurdle played with countable words on a countable alphabet and play of length $\omega$ fits well into the set-theoretic analysis of determinacy for games on the real numbers. Let us say that the codebreaker wins at a finite stage by achieving the all-green-tiles feedback answer; but also she wins after infinite play if there is no codeword consistent with all the information that the absurdist had provided. The absurdist wins if play proceeds infinitely without ever giving all-green tiles as feedback at any stage, yet there remains a codeword in the dictionary consistent with all the color feedback information that had been given. So we can view this as a game of length $\omega$ played on the real numbers, as in the context of determinacy questions for games on $\R$.

\begin{question}
 For which dictionaries can the codebreaker win in infinite Absurdle? For which dictionaries can the codebreaker win always at a finite stage of play?
\end{question}

These seem to be subtle questions of set theory, and I do not have a complete answer. Meanwhile, much of our earlier analysis of infinite Wordle carries over to Absurdle.

\begin{theorem}\ \label{Theorem.Absurdle}
  \begin{enumerate}
    \item In infinite Absurdle with any dictionary over a finite alphabet of size $n$, the codebreaker can win in $n$ steps.
    \item In infinite Absurdle with any countable dictionary, the codebreaker has a strategy to ensure winning in the game of length $\omega$. 
    \item In infinite Absurdle with any dictionary over a countable alphabet and transfinite play, there is a strategy for the codebreaker to win at stage $\omega$ or earlier.
    \item In infinite Absurdle with the dictionary of all words on a countably infinite alphabet, there is no strategy ensuring that the codebreaker will win at a finite stage of play---the absurdist can win the game of length $\omega$.
    \item The dictionaries for which the codebreaker has a winning strategy to win at a finite stage in infinite Absurdle is closed under countable unions.
  \end{enumerate}
\end{theorem}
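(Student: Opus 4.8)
The plan is to adapt the dovetailing idea from the proof of Theorem~\ref{Theorem.Wordle-countable-unions} to the adversarial setting of Absurdle. Suppose $\Delta=\Union_n\Delta_n$ and that for each $n$ the codebreaker has a strategy $\sigma_n$ winning at a finite stage in Absurdle played with the dictionary $\Delta_n$. I would have the codebreaker run all the $\sigma_n$ simultaneously on disjoint blocks of stages: at stage \(2^n(2k+1)\) she plays the move that $\sigma_n$ prescribes in response to the feedback she has received at the earlier stages \(2^n(2k'+1)\), for \(k'<k\), devoted to $\sigma_n$. In this way each $\sigma_n$ sees a full, honest Wordle feedback stream on its own subsequence of stages, and the guesses of the combined strategy are interleaved into a single transfinite play against the absurdist.

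The central claim to establish is that this combined strategy wins. The natural argument runs by contradiction: suppose the absurdist survives the combined play without ever conceding all-green, and let $w$ be any codeword still consistent with all the answers the absurdist has given. Since \(w\in\Delta=\Union_n\Delta_n\), we have \(w\in\Delta_n\) for some~$n$. Now restrict attention to the subsequence of stages devoted to $\sigma_n$: the feedback the absurdist supplied there is precisely the Wordle feedback for $w$ against the $\sigma_n$-guesses, and it is consistent with \(w\in\Delta_n\) at every stage. Thus the $\sigma_n$-subsequence is a legitimate play of Absurdle on $\Delta_n$ in which $w$ remains a consistent codeword throughout. But $\sigma_n$ is a winning strategy for the codebreaker on $\Delta_n$, so along that subplay it must already have forced a win, contradicting the assumption that $w$ survives. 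Hence no codeword survives against the combined strategy, and the codebreaker wins.

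The hard part, and the step one must handle with care in order to land the statement exactly as worded, is to see that the win above occurs at a genuinely \emph{finite} stage rather than merely by stage~$\omega$. The difficulty is that the absurdist need never commit to a fixed codeword at any finite stage: he can keep two or more candidates alive while steadily shifting the surviving witnesses out of $\Delta_n$ and into $\Delta_m$ for ever-larger~$m$, so that no single $\Delta_n$ carries a codeword consistent through all finite stages, and the contradiction of the previous paragraph is only reached in the limit. To force the win finitely, the plan is to exploit the full strength of the finite-stage hypothesis on each $\sigma_n$ — namely the associated well-founded rank, under which every $\Delta_n$-consistent play is driven to the codeword in finitely many of $\sigma_n$'s own moves along every branch — and to show that, once the dovetailed blocks have between them exhausted the letters and positions needed to animate the higher candidates, the absurdist's flight to ever-larger indices can no longer be sustained. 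Controlling this escape is precisely where the argument is delicate, and it is the point on which a finite-stage conclusion, as opposed to the more easily obtained win by stage~$\omega$, ultimately turns.
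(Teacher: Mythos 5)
Your proposal addresses only statement (5) of the five-part theorem. The paper proves all five claims at once by transferring the earlier Wordle arguments (theorem~\ref{Theorem.Wordle-finite-alphabet}, observation~\ref{Observation.Wordle-countable-dictionary}, theorems~\ref{Theorem.Wordle-win-by-omega}, \ref{Theorem.Wordle-not-finite-stage}, and~\ref{Theorem.Wordle-countable-unions}), on the strength of the observation that every play of Absurdle is, from the codebreaker's perspective, a worst-case legal play of Wordle. So even if your argument were complete, it would cover one fifth of the statement: for (1)--(4) one must check that the relevant Wordle strategies use only the consistency of the feedback, never the existence of a fixed hidden codeword (for instance, in (1) any word consistent with the answers so far agrees with the green cells and shows a fresh letter at every non-green cell, so the strategy of theorem~\ref{Theorem.Wordle-finite-alphabet} goes through verbatim), and your proposal never engages those parts.

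Within (5), your first two paragraphs do essentially establish---modulo the small repair that $\sigma_n$ needs a default move once the feedback on its thread becomes inconsistent with every word of $\Delta_n$, since a strategy is only defined on legal plays of its own game---that the dovetailed strategy wins \emph{by stage $\omega$}: any codeword surviving all answers lies in some $\Delta_n$ and would make that thread an infinite legal $\Delta_n$-play against the finite-stage-winning $\sigma_n$, which is impossible. But your third paragraph is the crux of the claim as worded, and it is left as a declared hope rather than an argument: no rank or exhaustion argument blocking the absurdist's ``flight to ever-larger indices'' is actually given. Indeed, that step cannot be carried out in general. Take $\Sigma=\omega$ and $\Delta_n=\set{w_n}$, where $w_n$ is the constant-$n$ word; each singleton dictionary is won at the very first guess. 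In the union, however, every guessword must come from the dictionary and so is some constant word $w_k$, and the absurdist may answer all-gray---this being the exact feedback of every $w_m$ with $m\neq k$, since the letter $k$ does not occur in $w_m$ at all---thereby eliminating only $w_k$ and keeping infinitely many words consistent at every finite stage. The absurdist is thus never forced to concede all-green, so the union admits no finite-stage win whatsoever, even though the codebreaker does win at stage $\omega$. The escape you worried about is therefore genuinely unstoppable, and the most your dovetailing can deliver is closure under countable unions for \emph{winning by stage $\omega$}, which is what your second paragraph proves. Your identification of this difficulty is sharp and to your credit---the paper's own proof of (5) is a one-line transfer of theorem~\ref{Theorem.Wordle-countable-unions} that does not engage the absurdist's freedom not to commit to a codeword---but as it stands your proposal proves neither statement (5) as worded nor statements (1)--(4).
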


\begin{proof}
These claims are proved just as in theorem \ref{Theorem.Wordle-finite-alphabet}, observation \ref{Observation.Wordle-countable-dictionary}, theorem \ref{Theorem.Wordle-win-by-omega}, theorem \ref{Theorem.Wordle-not-finite-stage}, and theorem \ref{Theorem.Wordle-countable-unions}. The point is that from the codebreaker's perspective, every game of infinite Wordle might as well be a game of infinite Absurdle, since every play of Absurdle is a legal play of Wordle. Indeed, from the codebreaker's perspective, every play of Absurdle is a worst-case play of Wordle.
\end{proof}

\section{Infinite Mastermind}

\begin{wrapfigure}{r}{.32\textwidth}\hfill
\begin{tikzpicture}[scale=.18,transform shape,peg/.style={circle,inner sep=1cm,draw,fill=#1},peg/.default={white},rpeg/.style={circle,inner sep=.33cm,draw,thin,fill=#1},rpeg/.default={white}]
\useasboundingbox (0,-2) rectangle (19,23);
\draw [thick,fill=Wheat!50] (3,-2) rectangle +(16,4);
\draw (0,0) ++(5,0) \Mastermind{SkyBlue,Orange,Yellow,Orchid!70};
\draw (0,5) \Masterresult{black}{black}{black}{black} ++(5,0) \Mastermind{SkyBlue,Orange,Yellow,Orchid!70} +(-1.5,1) node[scale=10] {\rlap{\checkmark}};
\draw (0,9) \Masterresult{gray}{gray}{gray}{gray} ++(5,0) \Mastermind{Orange,SkyBlue,Orchid!70,Yellow};
\draw (0,13) \Masterresult{black}{gray}{gray}{white} ++(5,0) \Mastermind{Orange,Orchid!70,Yellow,red};
\draw (0,17) \Masterresult{black}{white}{white}{white} ++(5,0) \Mastermind{SkyBlue,SkyBlue, red, red};
\draw (0,21) \Masterresult{black}{white}{white}{white} ++(5,0) \Mastermind{Orange,Orange,LimeGreen,LimeGreen};
\end{tikzpicture}
\captionsetup{style=rightside}
\caption{A play of Mastermind}\label{Figure.Mastermind}
\end{wrapfigure}
Wordle can be seen as a variation of an older game known as \emph{Mastermind}, a more difficult and subtle game for the codebreaker, because information is provided only about the number of correct bits in a guessword, rather than specifically about their location.\footnote{Mastermind itself is a variation of the earlier game Cows and Bulls, which is essentially the two-color case.} Mastermind is played between two players, the \emph{codemaker}, who creates a secret codeword sequence of colors chosen from a fixed set $\Sigma$ of available colors, and the \emph{codebreaker}, who strives to discover the codeword by proposing a series of candidate guesswords, receiving feedback about each of them concerning its accuracy. In figure \ref{Figure.Mastermind}, the codemaker has placed the secret codeword in the box at the bottom, hidden from view, and the candidate guesswords of the codebreaker proceed from the top, winning at the fifth guess. For each candidate guessword, the Mastermind feedback is provided with the indicators at left---black indicators count the number of fully correct pegs of the right color in the right position; gray indictors count the number of remaining pegs that could become correct if the incorrect pegs were to be suitably rearranged in the best possible way; and empty indicator holes thus count the number of erroneous pegs, which would remain incorrect after any such rearrangement. As I had mentioned, therefore, the Mastermind indicators give information merely about the number of correct and incorrect pegs, with no direct information about which specific pegs are correct or could become correct with rearrangement, although this further location information can sometimes be inferred, and successfully making these inferences is an important strategic element of the game. The standard finite version of the game has codewords of length four, with six colors available as illustrated by figure \ref{Figure.Mastermind}, although interesting varieties of the game result by varying these parameters.

\enlargethispage{20pt}
Let us consider the natural infinitary versions of Mastermind, where we allow infinitely long codewords and guesswords, and play proceeds through a possibly infinite sequence of guesswords, perhaps transfinitely. In the main case, we shall consider codewords of length $\omega$, with colors taken from a fixed countable set $\Sigma$ of available colors. The game is interesting to consider already with only two colors, but there could be any finite number or infinitely many colors. To play the game, the codemaker selects a hidden codeword $w\in{}^\omega\Sigma$, an infinite sequence of the allowed colors. The codebreaker proposes successive candidate guesswords $s\in{}^\omega\Sigma$, and after each guess receives the Mastermind feedback, three indicators: the correctness count $\kappa$, which is the number of bits (whether finite or infinite) that are fully correct in the guessword; the rearrangement count $\rho$, which is the number of bits that could become correct with a suitable rearrangement of the incorrect bits; and the inherent error count $\varepsilon$, which is the number of bits that must remain incorrect in any such permutation.\footnote{This formulation of the feedback in infinite Mastermind makes what seems to me the most natural choice from several reasonable alternatives, regarding differences that do not arise in the finite case. In particular, the rearrangement permutations here must act only on the incorrect pegs---one is not allowed to move correct pegs around in attempt to free up additional pegs of a color that had occurred infinitely. Notice also that the permutation formulation of the error count is not quite the same as what would arise by allowing injective partial rearrangement functions.} Thus, the Mastermind indicator feedback consists of the three cardinal numbers $(\kappa,\rho,\varepsilon)$, whether finite or countably infinite. For any given codeword and guessword, it is not difficult to see that there is a permutation of the incorrect pegs in the guessword which simultaneously minimizes the number of resulting incorrect pegs while also maximizing the number of correct pegs (minimizing and maximizing the cardinals, not necessarily the sets). The three indicator cardinals $(\kappa,\rho,\varepsilon)$ thus correspond respectively to the information provided in ordinary finite Mastermind by the number of small black indicators, the number of gray indicators, and the number of empty indicator holes. Notice that the cardinal sum $\rho+\varepsilon$ is the total number of incorrect bits in the guessword. A guessword with feedback $(\omega,0,0)$ therefore means that there are infinitely many correct bits and no incorrect bits, and so this is exactly the case where the guessword is identical to the codeword---the codebreaker has won.

In finite Mastermind, there are two standard variations depending on whether the color sequences are allowed to repeat colors or not. In the infinitary context, it will turn out that the no-duplication variation is much harder for the codebreaker to win and exhibits some very subtle connections with set-theoretic independence, which I explore in section \ref{Section.Mastermind-numbers}. But first, let us treat the duplication-allowed variation.

We might naturally allow the game to proceed transfinitely, with the codebreaker making guesses $s_\alpha$ at every ordinal stage $\alpha$. How many stages of guessing will be required? The codebreaker could definitely win with the crude strategy of attempting to guess in turn every single possible codeword. She could enumerate all possible codewords in a well-ordered sequence and simply guess them one after the other until the winning feedback was achieved. But in fact, there is a much quicker method---if there are only countably many colors, then the codebreaker can always win by stage $\omega$, I claim, making the winning guess at stage $\omega$ at the latest.

\begin{theorem}\label{Theorem.Mastermind-win-by-stage-omega}
 In infinitary Mastermind with a countable set of colors and duplication of colors allowed in guesswords, the codebreaker can discover the codeword always by stage $\omega$, and furthermore, she can do this using the same countable sequence of guesswords at all the finite stages, regardless of feedback.
\end{theorem}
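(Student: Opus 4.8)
The plan is to reduce the theorem to the task of determining, one coordinate at a time, the color $w(n)$ that the codeword $w$ displays at each position $n<\omega$, using only the feedback from a single fixed countable family of guesses played at the finite stages. The essential difficulty, and the reason the naive approach fails, is that the correctness count $\kappa$ is blind to a change at a single position once it is infinite: in cardinal arithmetic $\omega=\omega+1=\omega-1$, so comparing the $\kappa$-values of two guesses differing in one coordinate reveals nothing when the relevant color occurs infinitely often. I would therefore extract the positional information from the \emph{rearrangement count} $\rho$ instead, which---being a count of color overlaps on the set of incorrect positions rather than a raw tally of correct positions---remains sensitive to a single well-placed probe even amid infinite counts.

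Concretely, I would have the codebreaker play two interleaved families of guesses, countably many in all. First, for each color $c\in\Sigma$, the constant guess whose every entry is $c$; its correctness count is exactly $|A_c|$, where $A_c=\{n:w(n)=c\}$, so after these guesses the codebreaker knows precisely which colors occur in the codeword. Second, for each position $n$ and each color $c$, the \emph{probe} $s_{n,c}$ that places $c$ at coordinate $n$ and a fixed reference color $d_c\neq c$ everywhere else. Writing the rearrangement count as $\rho=\sum_x\min(g_x,w_x)$, where $g_x$ and $w_x$ count the incorrect positions carrying guess-color $x$ and codeword-color $x$ respectively, only the colors $x=d_c$ and $x=c$ contribute, so a short computation gives $\rho(s_{n,c})=\min(g_{d_c},w_{d_c})+\min(g_c,w_c)$. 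The crux is to verify that, whenever $c$ occurs in the codeword at all, this quantity vanishes exactly when $w(n)=c$: if $w(n)=c$ then coordinate $n$ is correct and the remaining incorrect positions carry guess-color $d_c$ against strictly non-$d_c$ codeword colors, forcing $\rho=0$; whereas if $w(n)\neq c$ then coordinate $n$ is incorrect with guess-color $c$, and since $c$ occurs elsewhere in $w$ there is a genuine color overlap on the incorrect set, giving $\rho\geq 1$.

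I expect the main obstacle to be the bookkeeping of the degenerate cases in this computation---colors occurring zero times or exactly once, and the possibility that $w(n)$ is itself the reference color $d_c$---which is precisely why the constant guesses are run first: they let the codebreaker discard every color not occurring in $w$, so that a spurious reading $\rho(s_{n,c})=0$ produced by a color $c$ absent from the codeword is never mistaken for an occurrence. Granting the key equivalence, the argument concludes cleanly: at stage $\omega$, for each position $n$ the codebreaker locates the unique color $c$ that both occurs in the codeword and satisfies $\rho(s_{n,c})=0$, declares $w(n)=c$, assembles the full codeword $w$, and plays it as the guess at stage $\omega$, winning. Since the entire family of guesses was specified in advance and is countable, this is exactly the promised uniform winning strategy that reaches the codeword by stage $\omega$ regardless of the feedback.
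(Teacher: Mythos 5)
Your proposal is correct and takes essentially the same approach as the paper's own proof: constant-color guesses to learn which colors occur in the codeword, followed by nearly-constant probes (a single off-color peg at each position), with the vanishing of the rearrangement count $\rho$ certifying that the probed color occupies the probed position, and the known-occurrence requirement ruling out spurious $\rho=0$ readings. The only differences are cosmetic---you fix one reference color $d_c$ per color rather than playing all nearly-constant sequences, and you compute $\rho$ via the formula $\sum_x\min(g_x,w_x)$ where the paper argues directly about rearranging incorrect pegs.
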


\begin{proof}
This argument can be seen as an infinite Mastermind analogue of theorem \ref{Theorem.Wordle-win-by-omega} for infinite Wordle. I shall describe a certain countable collection of guesswords, which the codebreaker can put forth at the finite stages, and the answers to those questions will uniquely determine the codeword, which can then be played at stage $\omega$, if the codebreaker hasn't already hit upon it during play. Specifically, the codebreaker shall inquire systematically about every possible constant-color sequence, as well as every possible nearly-constant color sequence, a sequence that is constant except for one off-color peg. Since there are only countably many colors and countably many positions, she may inquire about all these constant and nearly constant sequences in countably many stages of play.

Let us observe a little about the meaning of the Mastermind response to a constant-color sequence. I claim the response will have the form $(\kappa,0,\varepsilon)$, where $\kappa$ is precisely the number of pegs in the codeword having that color and $\varepsilon$ is the number of pegs in the codeword having some other color. The rearrangement count $\rho$ must be $0$ for a constant-color guessword because there is no way to rearrange the sequence so as to achieve any additional correct pegs, since the sequence remains unchanged by rearranging. In this way, the codebreaker shall come to know the number of pegs of each color in the true codeword.

Similarly, let us consider the feedback indicator response to a nearly-constant guessword, a sequence that is constant except for exactly one off-color bit. Suppose, for example, that $s$ has constant value red except for a blue peg in position $n$, and that we already know that blue occurs at least once in the codeword. If the Mastermind response to this inquiry has the form $(\kappa,0,\varepsilon)$, then I claim the codebreaker may deduce that the codeword must indeed have a blue peg in position $n$, for otherwise that bit would have been incorrect, but we could have made a rearrangement of it to become correct, by placing the blue peg where it belongs,  and so the rearrangement count wouldn't have been $0$. If alternatively, the Mastermind response is $(\kappa,\rho,\varepsilon)$ with a positive rearrangement count $\rho>0$, then I claim that we may deduce that the codeword does not have a blue peg in position $n$, since if it did, that peg would have been included amongst the correct bits and so the permutation of the remaining bits would be rearranging only red pegs, changing nothing, and so $\rho$ couldn't have been nonzero.

Thus, by inspecting the answers to the constant sequences, the codebreaker comes to know which colors occur in the codeword, and by inspecting the answers to all the various nearly constant sequences, the codebreaker comes to know in exactly which position a given color occurs. Thus, in knowing the answers to all the inquires made at the finite stages of play, the codebreaker is in a position to know the codeword with certainty at stage $\omega$, as claimed.
\end{proof}

A modification of the strategy shows that the codebreaker can play so that the guesswords formed at finite stages converge to the true codeword. Namely, whenever the true color of a cell becomes known, then the codebreaker can just place the correct color into that cell henceforth, but otherwise continue with the constant and nearly constant sequences. In this way, the true codeword will be the limit of the guesswords made at the finite stages.

Let me define that a set $S$ of Mastermind guesswords is a \emph{winning} set, if every codeword is uniquely determined by the Mastermind feedback answers to those guesswords. Such a set can be used to win Mastermind at stage $|S|$, and the proof of theorem \ref{Theorem.Mastermind-win-by-stage-omega} shows that indeed there is a countable winning set of guesswords for duplication-allowed infinite Mastermind, allowing the codebreaker to win at stage $\omega$. I shall prove next that this is optimal in the sense that there is no infinite Mastermind analogue to theorem \ref{Theorem.Wordle-finite-alphabet}; that is, although the codebreaker can win infinite Wordle for a finite alphabet at some finite stage of play, there is no such strategy to win infinite Mastermind at a finite stage of play.

\begin{theorem}\label{Theorem.Mastermind-no-finite-strategy}
 In infinite Mastermind with at least two colors, whether or not duplication of colors is allowed, there is no strategy ensuring that the codebreaker can win at some finite stage of play.
\end{theorem}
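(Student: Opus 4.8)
The plan is to avoid any explicit delaying construction and instead give a short \emph{counting} argument, exploiting that the Mastermind feedback carries only countably much information per move while the space of codewords is uncountable. Suppose toward contradiction that $\sigma$ is a strategy guaranteeing a win at some finite stage. Recall that a strategy is by definition a function of the feedback history alone, so that the guess at stage $k$ is $s_k=\sigma(f_0,\ldots,f_{k-1})$, where $f_i=(\kappa_i,\rho_i,\varepsilon_i)$ is the feedback to $s_i$; and recall that the codebreaker wins at a finite stage $n$ exactly when $s_n=w$, that is, when the feedback is $(\omega,0,0)$. For each codeword $w$ the play against $w$ is therefore completely determined, producing a well-defined feedback history, and by assumption it reaches a winning stage $n_w$ with $s_{n_w}=w$.

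The key observation I would establish is that a beaten codeword is recoverable from the finite feedback history preceding its defeat. Indeed, letting $\tau_w=(f_0,\ldots,f_{n_w-1})$ denote the feedback history up to the winning move, we have $w=s_{n_w}=\sigma(\tau_w)$, so $w$ is simply the value of $\sigma$ on $\tau_w$. Hence the map $w\mapsto\tau_w$, defined on the set $W$ of codewords that $\sigma$ beats at a finite stage, is injective. Now each feedback triple lies in $(\omega+1)^3$, a countable set, since in length-$\omega$ Mastermind each of $\kappa,\rho,\varepsilon$ is a cardinal at most $\omega$; therefore the set of all finite feedback histories is countable, and I conclude $\Card{W}\le\aleph_0$.

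Finally I would compare this against the pool of available codewords. In the duplication-allowed game the dictionary is all of ${}^\omega\Sigma$, which contains the uncountable set ${}^\omega\set{0,1}$ of size $\continuum$; in the no-duplication game, which forces infinitely many colors, the allowed codewords are the injective $\omega$-sequences from $\Sigma$, of which there are again $\continuum$ many. Either way the dictionary is uncountable, so $W$ cannot exhaust it, and any codeword $w^\ast\notin W$ is one that $\sigma$ never places at a finite stage—contradicting that $\sigma$ wins at a finite stage. The step I expect to require the most care is the recoverability identity $w=\sigma(\tau_w)$, since this is what converts ``winning'' into an injection of beaten codewords into finite feedback histories; once that is secured, the theorem reduces to the inequality $\aleph_0<\continuum$. (One could alternatively run a direct adversary construction in the style of Theorem~\ref{Theorem.Wordle-not-finite-stage}, keeping only finitely many positions of the codeword committed at each finite stage so that it is never fully revealed, but the counting argument is cleaner and applies uniformly across the variants.)
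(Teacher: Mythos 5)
Your proposal is correct and takes essentially the same approach as the paper: the paper's proof is exactly this counting argument, observing that since there are only countably many possible feedback triples $(\kappa,\rho,\varepsilon)$, the strategy can produce only countably many guesswords across all finite stages of all plays, while there are continuum many codewords, so some codeword is never guessed and hence never beaten at a finite stage. Your injection $w\mapsto\tau_w$ via the identity $w=\sigma(\tau_w)$ is just a more explicit packaging of the same count.
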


\begin{proof}
Consider any strategy for the codebreaker, telling her which guesswords to form at each finite stage on the basis of the answers to the earlier guesswords. Since there are only countably many possible Mastermind answers $(\kappa,\rho,\varepsilon)$ to any guessword, it follows that altogether the strategy will produce only countably may guesswords at finite stages. So there must be many codewords that are never guessed at any finite stage, and thus the strategy has not won against them at a finite stage.
\end{proof}

\section{No-duplication Mastermind}

Let us consider a little more carefully the no-duplication variation of infinite Mastermind, in which codewords and guesswords must be sequences of distinct colors from the color set $\Sigma$, with no duplication of colors allowed within any word. I shall use the notation $^\omega\Sigma$ to refer to the set of all $\omega$-sequences over the color set~$\Sigma$ and~$\inj\Sigma$ to refer to the set of injective sequences, that is, sequences of distinct colors, with no duplication of colors.

In the no-duplication case, the standard Mastermind feedback $(\kappa,\rho,\varepsilon)$ for a codeword $w$ on guessword $s$ simplifies a little. The correctness count $\kappa$ is the number of colors placed into the correct position, and the rearrangement count $\rho$ is the number of colors appearing in $s$, but in the wrong position with respect to $w$. For no-duplication color words, the inherent error count $\varepsilon$ will have finite value $n$ if and only if there are exactly $n$ colors of $w$ not appearing in $s$ and exactly $n$ colors of $s$ not appearing in $w$, since a permutation can align these two sets, while making the other colors match. In particular, the inherent error is $0$ if and only if the two sequences use exactly the same colors. If the colors of the guessword are contained within those of the codeword or conversely, then the inherent error count $\varepsilon$ will either be $0$ or $\omega$, depending on whether they use exactly the same colors; even one missing or extra color will cause an error in the permutation that propagates infinitely.

In the no-duplication variation, it turns out that the codebreaker can no longer win with a countable sequence of guesswords.

\begin{theorem}\label{Theorem.Mastermind-no-repetition}
 In no-duplication infinite Mastermind, for no countable ordinal $\gamma$ does the codebreaker have a strategy to win by stage $\gamma$. In particular, there is no countable winning set of guesswords.
\end{theorem}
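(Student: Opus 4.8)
The plan is to reduce everything to a single combinatorial fact about bijections and to exploit that, in the no-duplication game, \emph{permutation codewords} are especially rigid. Take $\Sigma=\omega$ and call $w\in\inj\Sigma$ a permutation codeword when $\ran w=\Sigma$. First I would record the exact feedback such a $w$ gives to an arbitrary guess $s\in\inj\Sigma$: since $\ran s\of\ran w=\Sigma$ we have $\kappa+\rho=|\ran s|=\omega$, and unwinding the definitions, $\kappa$ is precisely the number of positions $n$ with $w(n)=s(n)$, $\rho$ is the number with $w(n)\ne s(n)$, while $\varepsilon$ equals $0$ when $\ran s=\Sigma$ and equals $\omega$ otherwise (one missing color already forcing $\varepsilon=\omega$, as the paper notes). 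The crucial feature is that this $\varepsilon=\varepsilon_s$ depends only on the guess $s$, not on which permutation $w$ is. Hence any permutation codeword that is equal to $s$ at infinitely many positions and different from $s$ at infinitely many positions returns exactly $(\omega,\omega,\varepsilon_s)$ to $s$, a triple that is never the winning answer $(\omega,0,0)$.

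The single combinatorial fact I would isolate is this: for every countable family $\{s_m\}_{m<\omega}\of\inj\Sigma$ there exist two \emph{distinct} bijections $w\ne w'$ of $\omega$ such that for every $m$, each of $w,w'$ agrees with $s_m$ at infinitely many positions and disagrees at infinitely many positions. I would prove this by a length-$\omega$ back-and-forth: build finite partial injections $p_0\of p_1\of\cdots$ dovetailing the requirements ``$w$ is total'', ``$w$ is onto'', ``$w$ gains another agreement with $s_m$'', and ``$w$ gains another disagreement with $s_m$''. Each requirement is met by one finite extension---e.g.\ to add an agreement with $s_m$, pick a position $n\notin\dom p$ with $s_m(n)\notin\ran p$ (possible as $s_m$ is injective and $p$ is finite) and set $p(n)=s_m(n)$. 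Running the same construction a second time while forcing a single disagreement between the two outputs yields $w\ne w'$. Because every stage keeps $p$ finite, domains and ranges stay co-infinite throughout, so all the steps are available.

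To finish, given any purported strategy $\sigma$ winning by some countable ordinal $\gamma$, I would simulate the play in which every guess is answered by the stereotyped feedback above: at stage $\alpha<\gamma$ the strategy outputs a guess $s_\alpha$, and I reply $(\omega,\omega,\varepsilon_{s_\alpha})$, a deterministic process defining a countable family $\{s_\alpha:\alpha<\gamma\}$. Applying the lemma to this family produces two distinct bijections $w\ne w'$; by the feedback computation, each of $w$ and $w'$ returns exactly $(\omega,\omega,\varepsilon_{s_\alpha})$ to every $s_\alpha$, so this simulated transcript is the genuine transcript of $\sigma$ played against $w$, and equally against $w'$. Thus $\sigma$ makes the same guesses against both, never receives a winning answer before stage $\gamma$, and at no stage has the codeword been determined (two codewords remain consistent). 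Hence $\sigma$ wins against neither, for any $\gamma<\omega_1$; the ``no countable winning set'' clause is then the non-adaptive special case, since a countable winning set would yield a strategy winning by stage $\omega$. The step I expect to be the real work is the lemma's back-and-forth---checking that a single pair of bijections can be kept simultaneously infinitely-often-equal and infinitely-often-different to all countably many guesses while remaining onto---together with the careful verification of the feedback formula, where the infinitary subtleties flagged in the paper (notably that a lone missing color propagates to $\varepsilon=\omega$) are exactly what makes $\varepsilon_s$ independent of the permutation codeword.
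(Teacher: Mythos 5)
Your proposal is correct for a countably infinite color set, and for that case it is essentially the paper's own argument: the paper likewise has every guess answered with $(\omega,\omega,\varepsilon)$, where $\varepsilon$ is $0$ or $\omega$ according to whether the guess uses every color, and then builds two distinct codewords using every color exactly once by finite approximations, securing infinitely many agreements and infinitely many disagreements with each of the countably many guesses. Your permutation-codeword framing, the observation that $\varepsilon$ then depends only on the guess, and the explicit back-and-forth lemma are a more detailed rendering of the paper's genericity construction, and your derivation of the ``no countable winning set'' clause as the non-adaptive special case matches the paper's reasoning exactly.

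The one thing you omit is that the paper proves the theorem for an arbitrary infinite color space $\Sigma$, including uncountable ones, and there your device breaks down: no $\omega$-sequence can be surjective onto an uncountable $\Sigma$, so there are no permutation codewords and $\varepsilon$ is no longer determined by the guess alone. The paper disposes of that case first and more easily: answer every guess with $(0,0,\omega)$; since only countably many colors appear among the guesses made before a countable stage $\gamma$, there remain many codewords built entirely from unused colors, all consistent with that feedback, so the strategy cannot have separated them. If you either add this easy case or restrict attention to countable $\Sigma$ (which is all that the applications to the mastermind number $\mm$ require), your proof is complete.
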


\begin{proof}
Let me first treat the easier case of an uncountable color space $\Sigma$. Consider any fixed strategy for the codebreaker and any countable ordinal $\gamma$, and let $\<s_\alpha\mid\alpha<\gamma>$ be the resulting play of the game occurring when the Mastermind feedback at every stage is $(0,0,\omega)$, meaning that every guessword $s_\alpha$ is held to have a zero correctness count $\kappa=0$, a zero rearrangement count $\rho=0$, and an infinite inherent error count  $\varepsilon=\omega$. Notice that any codeword sequence consisting entirely of colors not appearing in any $s_\alpha$ would give these answers. Since $\gamma$ is countable and therefore at most countably many colors appear altogether in the guesswords $s_\alpha$, it follows from the uncountability of $\Sigma$ that there are many codewords still available that do not use any of those colors. All such codewords will give those $(0,0,\omega)$ answers for this play, and so the strategy has not succeeded in distinguishing them by stage $\gamma$.\goodbreak

A subtler case occurs, of course, when $\Sigma$ is countably infinite. Again fix any strategy for the codebreaker and any countable ordinal $\gamma$. Let $\<s_\alpha\mid\alpha<\gamma>$ be the play of the game according to this strategy, where the Mastermind feedback given to $s_\alpha$ now has an infinite correctness count $\kappa=\omega$, an infinite rearrangement count $\rho=\omega$, and an inherent error count $\varepsilon$ that is zero or $\omega$, depending on whether $s_\alpha$ uses every color or not, respectively. We now construct distinct codewords $c$ and $d$ that give rise to these answers and are thus not distinguished by the strategy before $\gamma$. The codewords $c$ and $d$ will be suitably generic codewords that use every color of $\Sigma$ exactly once, while having infinitely many points of agreement with each guessword $s_\alpha$ and infinitely many points of disagreement. Since $\gamma$ is a countable ordinal, we may re-enumerate the guesswords in a countable sequence $\<s_{\alpha_n}\mid n<\omega>$. We shall construct $c$ and $d$ by building them up by finite approximations. Namely, at each stage $n$ in this construction process (which is not the same as the game process) we extend the finite approximations so as to ensure additional points of agreement and disagreement with the earlier $s_{\alpha_k}$ for $k<n$. It follows that $c$ and $d$ will ultimately have infinite correctness and rearrangement counts, and the inherent error count will be $0$, if every color appears in $s_\alpha$, and otherwise it will be $\omega$, as discussed above. Thus, the strategy did not distinguish $c$ and $d$ by stage $\gamma$, and so it did not win by that stage.
\end{proof}

The genericity argument would also have worked in the uncountable color space case, if we had given feedback $(\omega,\omega,\omega)$, since generically we could produce many codewords having infinitely many points of agreement with each $s_\alpha$, infinitely many points of disagreement, and infinitely many totally new colors not appearing in any~$s_\alpha$.

\section{Simplified Mastermind}

Let me consider next a variation I call \emph{simplified Mastermind}, where the feedback information on guessword $s$ with codeword $w$ consists of just two cardinals---the correctness and incorrectness counts.
\begin{align*}
  \text{correctness count:}\quad\|s=w\| \quad    & =_{\text{def}}\quad |\set{n\mid s(n)=w(n)}| \\
  \text{incorrectness count:}\quad\|s\neq w\|\quad & =_{\text{def}}\quad |\set{n\mid s(n)\neq w(n)}|
\end{align*}
In simplified Mastermind we thus entirely omit the issue of rearrangements and the number of bits that could become correct by rearrangement. We might also simplify further by considering the correctness-only variation, providing only the correctness count $\|s=w\|$ at each stage, or similarly for the incorrectness-only variation. All of the simplified variations of the game, of course, are harder for the codebreaker to win than unsimplified Mastermind, because less information is provided. Indeed, in simplified Mastermind, it is not possible to discover the codeword by stage $\omega$ or indeed at any countable ordinal stage, as was possible for duplication-allowed infinite Mastermind in theorem \ref{Theorem.Mastermind-win-by-stage-omega}.

\begin{theorem}\label{Theorem.Simplified-Mastermind-no-countable-winning-set}
 In simplified infinite Mastermind with at least two colors, whether or not duplication is allowed, for no countable ordinal $\gamma$ does the codebreaker have a strategy to win by stage $\gamma$. In particular, there is no countable winning set in simplified infinite Mastermind.
\end{theorem}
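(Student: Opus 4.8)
The plan is to run the same genericity argument used for Theorem~\ref{Theorem.Mastermind-no-repetition}, now with the impoverished two-cardinal feedback. Fix any codebreaker strategy and any countable ordinal $\gamma$, and let $\langle s_\alpha \mid \alpha<\gamma\rangle$ be the play that results when every feedback answer is declared to be $(\omega,\omega)$, that is, an infinite correctness count $\|s=w\|=\omega$ together with an infinite incorrectness count $\|s\neq w\|=\omega$. Since a strategy reacts only to feedback, fixing the answers in this way pins down the play. The goal is to manufacture two distinct codewords $c$ and $d$ that both genuinely produce exactly this feedback against every guessword $s_\alpha$. The transcript is then identical in both scenarios and never exhibits the winning answer $(\omega,0)$, so the strategy will never have placed $c$ (nor $d$) as a guess; indeed a single such $c$ already witnesses that the strategy has not won by stage $\gamma$, and the second codeword is what yields the winning-set clause.

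First I would re-enumerate the countably many guesswords as $\langle s_{\alpha_n}\mid n<\omega\rangle$, using that $\gamma$ is countable, and then build $c$ and $d$ by finite approximations, exactly in the style of Theorem~\ref{Theorem.Mastermind-no-repetition}. I fix two distinct colors and make $c$ and $d$ disagree at position $0$, guaranteeing $c\neq d$. Dovetailing over all requirements, at each step I extend the current finite approximation so as to secure one further position of agreement and one further position of disagreement with whichever $s_{\alpha_n}$ is under consideration: to force an agreement at a fresh position $p$ I copy the value $s_{\alpha_n}(p)$, and to force a disagreement I plant any color different from $s_{\alpha_n}(p)$, which is possible precisely because at least two colors are available. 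Taking unions yields total codewords $c$ and $d$, each agreeing with every $s_{\alpha_n}$ infinitely often and disagreeing infinitely often, so the simplified feedback to each $s_\alpha$ is exactly $(\omega,\omega)$ as promised, and the play is a legitimate honest play against each of them.

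The one place needing genuine care is the no-duplication variant, where codewords must be injective and so $\Sigma$ must be taken countably infinite for valid codewords to exist at all. There I would carry along the additional bookkeeping that the approximations stay injective, always introducing fresh colors at the disagreement positions and, at the agreement positions, exploiting that the injective guessword $s_{\alpha_n}$ assumes infinitely many values not yet used by the approximation, so a matching unused value is always available. This is the main obstacle, but it is a mild one: it is routine priority-style dovetailing and is the direct analogue of the generic construction already carried out in Theorem~\ref{Theorem.Mastermind-no-repetition}.

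Finally, the ``in particular'' clause falls out of the same construction with no reference to any strategy: given a countable candidate winning set $S=\set{s_n\mid n<\omega}$, dovetailing the agreement and disagreement requirements over the members of $S$ produces distinct codewords $c\neq d$ giving identical feedback $(\omega,\omega)$ to every $s\in S$. Hence $S$ fails to determine the codeword uniquely and so is not a winning set.
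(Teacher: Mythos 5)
Your proposal is correct and follows essentially the same route as the paper's own proof: fix the strategy, force every feedback answer to be $(\omega,\omega)$, and then build two distinct codewords by finite approximations, dovetailing requirements so that each agrees and disagrees infinitely often with every guessword, which also yields the winning-set clause. The only difference is that you spell out the injectivity bookkeeping for the no-duplication variant explicitly, which the paper leaves implicit; this is a welcome clarification rather than a departure.
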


\begin{proof}
We mount a genericity argument as in the proof of theorem \ref{Theorem.Mastermind-no-repetition}. Suppose that the codebreaker has a strategy directing her how to play at countable ordinal stages. Let us consider the resulting play of the game $\<s_\alpha\mid\alpha<\gamma>$ by following this strategy, where at every stage of play the simplified Mastermind feedback indicates infinitely many points of agreement and infinitely many points of disagreement, that is, giving feedback $(\omega,\omega)$. I shall produce two distinct codewords $c$ and $d$ both giving rise to those answers for that sequence, showing that the strategy cannot have won by stage $\gamma$. Since $\gamma$ is countable, we can enumerate the guesswords in a countable sequence $\<s_{\alpha_n}\mid n<\omega>$. We now build $c$ and $d$ in stages of finite approximations, starting with different colors, and then extending those at each stage $n$ so as to provide additional points of agreement and points of disagreement with all earlier $s_{\alpha_k}$ for $k\leq n$. In the limit, we have thus produced distinct sequences $c$ and $d$ having infinite agreement and infinite disagreement with each $s_\alpha$, as claimed. So the strategy did not separate these two codewords by stage $\gamma$. It follows immediately that there can be no winning countable set of guesswords, since such a set would enable the codebreaker to win at stage $\omega$.
\end{proof}

Theorem \ref{Theorem.Simplified-Mastermind-no-countable-winning-set} shows that there can be no winning strategy for the codebreaker in simplified infinite Mastermind that wins uniformly by some countable ordinal stage. Nevertheless, if the continuum hypothesis holds, then the codebreaker could simply enumerate all possible codewords in order type $\omega_1$ and systematically place them as guesswords; for any given codeword, this strategy will win at the countable stage at which that codeword appears in the enumeration. So the difference is that the codebreaker cannot win uniformly by some fixed countable ordinal stage, even though it is consistent with \ZFC\ that she has a strategy to win (non-uniformly) always at some countable ordinal stage. The \CH\ connection begins to reveal the set-theoretic aspects of infinite Mastermind, which I shall explore more fully in section \ref{Section.Mastermind-numbers}

\section{Madstermind}

Let us consider next an Absurdle-like variation of Mastermind, which I call \emph{Madstermind}, played between the madster and the codebreaker. The madster plays the role of the Mastermind codemaker, giving Mastermind feedback to the guesswords at each stage, while the codebreaker makes her guesses, but there is no requirement for the madster to commit to a particular secret codeword in advance. Rather, like the absurdist in Absurdle, the madster can keep changing his mind about what the codeword might be, although throughout the play at every stage the answers must altogether be consistent with some codeword. Just as in Absurdle, it is as though the madster is thinking of a set of possible codewords, and then providing feedback answers so as to give away as little information as possible. The codebreaker wins at a finite stage of play with a guessword declared entirely correct---the madster should do this only if this is the only word remaining consistent with earlier answers---but the codebreaker can also win after infinite play, if there is no codeword consistent with all the information given during play. The madster aims to survive to $\omega$ with some possible codeword still in his pocket, consistent with all the answers he had given. This is now a two-player game of perfect information.

\begin{theorem}
 In infinite Madstermind with at least two colors and duplication allowed, the madster has a winning strategy---he can play so as to survive through all finite stages, while yet retaining a codeword at $\omega$ consistent with all those answers.
\end{theorem}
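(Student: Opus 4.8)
The plan is to give an explicit winning strategy for the madster, which suffices because Madstermind is a game of perfect information. The strategy is a finite-extension (genericity) construction in the spirit of the proof of theorem~\ref{Theorem.Wordle-not-finite-stage} and the genericity arguments of theorems~\ref{Theorem.Mastermind-no-repetition} and~\ref{Theorem.Simplified-Mastermind-no-countable-winning-set}: the madster builds a single codeword $w\in{}^\omega\set{0,1}$ using only two colors, by finite approximations, committing himself to definite bits of $w$ as the play proceeds, so that $w$ agrees with every guessword $s_n$ on infinitely many positions and disagrees with it on infinitely many positions. Restricting $w$ to two colors is harmless, since any peg in a guess bearing a color that $w$ never uses can contribute only to the error, never to a win. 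Because $w$ disagrees with each $s_n$ on an infinite set, the cardinal sum $\rho_n+\varepsilon_n$ is always infinite, so the feedback is never the all-correct answer $(\omega,0,0)$ and no finite-stage win ever occurs; and because every answer the madster gives is the genuine feedback of this one codeword $w$, the word $w$ witnesses at stage $\omega$ that the entire play was consistent. Thus the madster survives all finite stages with a codeword still in his pocket, exactly as the theorem demands.

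First I would set up the bookkeeping. At stage $n$ the madster sees the new guess $s_n$ and must both extend his finite partial commitment to $w$ and return a definite feedback triple $(\kappa_n,\rho_n,\varepsilon_n)$. I maintain, for each guess index, the requirements that $w$ agree with $s_n$ infinitely often, that $w$ realize infinitely many disagreements with $s_n$, and that $w$ be total. These are all requirements of the form ``arrange infinitely many witnesses,'' of which there are only countably many, and I satisfy them by dovetailing, each time choosing a fresh as-yet-uncommitted position as the next witness. Since only finitely many positions are ever committed at any moment, and each $s_n$ is infinite and so has infinitely many positions of whichever color is needed, a fresh witness is always available, and committing it cannot spoil another requirement, which asks only for infinitely many witnesses of its own kind. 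This guarantees $\kappa_n=\omega$ and that the disagreement set with each $s_n$ is infinite.

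The crux, and what I expect to be the main obstacle, is that the madster must return the \emph{exact} cardinal triple at a finite stage, even though $\rho_n$ and $\varepsilon_n$ depend on the entire infinite shape of $w$ through the rearrangement matching. Here the two-color restriction is what saves the day. Among the $\set{0,1}$-positions, the only incorrect positions that rearrangement can repair are those of the two opposite disagreement directions, $s_n=0$ with $w=1$ and $s_n=1$ with $w=0$; writing $p$ and $q$ for their numbers, a matching computation gives $\rho_n=2\min(p,q)$ and $\varepsilon_n=|p-q|$, with any exotic-colored pegs contributing further determined amounts to the counts. So the madster plans the two directions explicitly, always arranging each of $p$ and $q$ to be $0$, a fixed finite number, or $\omega$. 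If $s_n$ carries each of $0$ and $1$ infinitely often, he drives $p=q=\omega$ and answers $(\omega,\omega,0)$. If instead one color, say $1$, is dominant and $0$ occurs only finitely often in $s_n$, then the direction $s_n=0,\ w=1$ is capped by the finite set of $0$-positions of $s_n$, so he commits $w$ on all of those finitely many positions immediately at stage $n$, which pins $p$ down as a definite finite number once and for all, while he drives $q=\omega$ through the dovetailing, answering the definite triple $(\omega,2p,\omega)$.

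The reason this remains consistent across all the guesses is that the only positions the madster is ever \emph{forced} to fix in order to pin down a feedback value are the finitely many minority-color positions of a lopsided guess, and these he fixes eagerly and with priority given to earlier commitments. Crucially, he never insists that such a position take a particular value against an earlier choice; he simply reads off whatever value is already present (or assigns one if none is yet given) and sets the finite count $p$ accordingly. So two lopsided guesses of opposite dominant color that happen to share a boundary position create no conflict: the madster accepts the committed value and merely records a slightly larger finite $\rho$. All the genuinely infinitary requirements are then met with fresh witnesses and so never collide, legality of the madster's play is maintained at every finite stage because the answers given so far are always the true feedback of the current approximation suitably extended, and in the limit every committed triple is the true feedback of the single codeword $w$. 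No triple is ever $(\omega,0,0)$, since balanced guesses receive $\rho_n=\omega>0$ and lopsided guesses receive $\varepsilon_n=\omega>0$, and $w$ stays consistent with the whole play. Hence the madster wins.
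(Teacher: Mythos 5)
Your overall architecture---the madster commits to a two-color codeword $w\in{}^\omega\{0,1\}$ built by finite approximations, generic enough to agree and to disagree infinitely with each guess---is exactly the paper's approach (the paper uses red/blue), and your matching formulas are correct so long as every guess uses only those two colors. The genuine gap is that the theorem concerns an arbitrary color set with at least two colors, typically countably many, so guesses may contain pegs of colors that $w$ never uses (call them exotic pegs), and your treatment of these fails in two places. First, your standing requirement that $w$ agree with every guess infinitely often is simply unachievable against a guess having only finitely many $0/1$ pegs (for instance the constant guess on color $2$): agreement can occur only at positions where the guess shows $0$ or $1$, so your claim that ``each $s_n$ is infinite and so has infinitely many positions of whichever color is needed'' is false, and such guesses fall outside your balanced/lopsided dichotomy altogether. (The case is easily handled---commit the finite $0/1$ part and announce a finite $\kappa$ with $\varepsilon=\omega$---but your strategy as stated cannot even be executed against it.)

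Second, and more seriously, your matching computation is wrong in the presence of exotic pegs: it is not true that rearrangement can only repair pegs between the two opposite disagreement directions among the $\{0,1\}$-positions. An incorrect $1$-peg can also be moved onto a position where the guess shows an exotic color but $w=1$. Hence for a lopsided guess with dominant color $1$, finitely many $0$s, and infinitely many exotic pegs, the true rearrangement count is not your announced $2p$: it becomes $\omega$ as soon as infinitely many of that guess's exotic positions receive $w=1$, and nothing in your requirements prevents this---the dovetailing you run for other guesses and for totality will typically force it. Nor can you repair this by eagerly committing $w=0$ on the exotic positions of such a guess, since that is an infinite commitment, and two such commitments (for two lopsided guesses whose exotic supports are complementary) would force $w=0$ cofinitely, contradicting the infinite-agreement answers you already gave. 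Consequently the constructed $w$ need not realize the feedback you announced, and the heart of your argument---that $w$ certifies consistency of the whole play at stage $\omega$---collapses. The paper's case split is designed around exactly this issue: it answers $\rho=\omega$ for every guess that has infinitely many pegs in the madster's two colors but is not cofinitely of one of them (which covers your problematic lopsided-with-exotics case), and it reserves the finite, committed value of $\rho$ for guesses that are cofinitely of one color, where exotic pegs are necessarily confined to the finite part.
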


\begin{proof}
This argument employs once again the genericity idea used in the proofs of theorems \ref{Theorem.Mastermind-no-repetition} and \ref{Theorem.Simplified-Mastermind-no-countable-winning-set}. Suppose the colors include red and blue. The madster will aim to play always in accordance with the idea that the ultimate codeword will have infinitely many red pegs and infinitely many blue pegs, but no other colors, and furthermore, that those colors will appear in the codeword in a highly generic manner. As play unfolds, the madster will gradually commit in his mind to a finite initial initial segment of this ultimate codeword, but otherwise keep his options open except for the commitment to genericity. Thus, if a guessword has infinitely many red pegs or infinitely blue pegs, then since a generic codeword would find infinitely many places of agreement with those points, he will answer with the correctness count of $\kappa=\omega$, promising to himself to commit to particular instances of those agreement pegs in his later finite approximations. If the guessword has infinitely many red or blue and is not cofinite in either of these, then since generically there will also be many points of disagreement, he will answer with the rearrangement count of $\rho=\omega$, since these incorrect points can be rearranged so as to become correct. He has thus promised to find particular instances of such disagreement in his later finite approximations. If a guessword is cofinitely red or blue, then the madster can extend his finite approximation to cover the finite portion, and then answer with feedback $(\omega,n,\omega)$, where the rearrangement count $\rho=n$ is determined by the finite part of the guessword, since incorrect red or blue colors in that part of the guessword will be able generically to be placed correctly, but no others; the inherent error count $\varepsilon=\omega$ is infinite for a cofinite guessword, since every rearrangement will still be cofinite and hence have infinitely many incorrect bits. Finally, if a guessword uses colors other than red or blue, these will always count toward the inherent error count $\varepsilon$. At each stage, the madster extends the finite commitment so as to fulfill a few more of the promises made at earlier stages, and in this way, he will answer at every stage that is consistent with the ultimate codeword he is gradually revealing. Since no guessword will ever be declared fully correct by this procedure, the madster will survive to $\omega$ and win the play.
\end{proof}

Of course,  in light of theorem \ref{Theorem.Mastermind-win-by-stage-omega}, the madster cannot survive longer than stage $\omega$ for the duplication-allowed version of Madstermind, since the codebreaker can play so as to know the codeword at that stage. In the no-duplication variation, however, then we can generalize theorem \ref{Theorem.Mastermind-no-repetition} to show that the madster can survive through any countable ordinal stage.

\begin{theorem}
 The madster has a winning strategy in no-duplication infinite Madstermind that does not lose at any countable ordinal stage.
\end{theorem}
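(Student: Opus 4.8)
The plan is to give the madster a single static answering rule and then certify that the rule keeps the position consistent through every countable stage, using the genericity method already deployed in Theorem~\ref{Theorem.Mastermind-no-repetition}. Treating the main case where $\Sigma$ is countably infinite, I would have the madster privately intend his ultimate codeword to be a highly generic injective enumeration of \emph{all} of $\Sigma$ (every color used exactly once), and answer each guessword $s$ by a function of $s$ alone, ignoring the history: he replies $(\kappa,\rho,\varepsilon)=(\omega,\omega,0)$ if $s$ itself uses every color of $\Sigma$, and $(\omega,\omega,\omega)$ otherwise. This is exactly the feedback that a codeword using all colors returns to a guess with which it shares infinitely many agreements and infinitely many disagreements: by the no-duplication feedback analysis, $\rho=\omega$ because every color of $s$ already appears somewhere in such a codeword, so each of the $\omega$-many disagreements is a merely-misplaced color, and $\varepsilon$ depends only on whether $s$ omits a color.

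First I would dispose of the two easy halves of ``winning.'' The madster never announces the fully-correct feedback $(\omega,0,0)$, since he always reports $\rho=\omega$, so the codebreaker never wins at a finite stage; and he is never \emph{forced} to declare a guess correct, because the construction below will leave at least two codewords consistent at each stage. What remains is the one substantive obligation: at each countable ordinal stage, the answers given so far must be realized by a genuine codeword.

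The core step is therefore a consistency lemma: for any countable family $\<s_\beta\mid\beta<\gamma>$ of injective guesswords (and by stage $\gamma<\omega_1$ only countably many guesses have been played), there is an injective $w\in{}\inj\Sigma$ using every color of $\Sigma$ that agrees with each $s_\beta$ in infinitely many positions and disagrees in infinitely many positions. This is precisely the generic back-and-forth carried out in the proof of Theorem~\ref{Theorem.Mastermind-no-repetition}: re-enumerate the guesses as $\<s_{\beta_n}\mid n<\omega>$ and build $w=\bigcup_k p_k$ from finite injective approximations, discharging one requirement at each step from a master list — a new agreement with some $s_{\beta_n}$, a new disagreement with some $s_{\beta_n}$, the placement of a not-yet-used color, or the filling of a not-yet-used position. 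For the resulting $w$ one computes $\kappa=\rho=\omega$ and $\varepsilon=0$ exactly when $s_\beta$ uses all colors, matching every answer the madster has given, and since the construction yields two distinct such codewords, the position is strictly ambiguous. Feeding this back, consistency holds at every successor and every limit stage below $\omega_1$, so the madster survives all countable stages.

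The main obstacle to watch is the clash between injectivity and the two simultaneous infinitary demands placed on $w$ — that it use every color \emph{and} realize infinitely many agreements with each $s_\beta$ — since an agreement $w(n)=s_\beta(n)$ pins a specific color to a specific position and might collide with an earlier commitment. The back-and-forth bookkeeping is exactly what resolves this: at any finite stage only finitely many colors and positions are committed, so infinitely many admissible positions and fresh colors remain for the next agreement, disagreement, color, or position, and each requirement can be met without disturbing the others. No idea beyond this careful bookkeeping (the same mechanism powering Theorem~\ref{Theorem.Mastermind-no-repetition}) is needed. Finally I would note that the uncountable-$\Sigma$ case is strictly easier: there the madster answers $(0,0,\omega)$ throughout and witnesses consistency at each countable stage with a codeword built entirely from colors disjoint from the countably many used so far, so no genericity is required at all.
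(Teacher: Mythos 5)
Your proposal is correct and follows essentially the same route as the paper: the same case split on the cardinality of $\Sigma$, the same feedback rules ($(0,0,\omega)$ for uncountable $\Sigma$; $(\omega,\omega,\varepsilon)$ with $\varepsilon$ determined by whether the guessword uses all colors, for countable $\Sigma$), and the same appeal to the genericity construction of Theorem~\ref{Theorem.Mastermind-no-repetition} to certify consistency at every countable stage. The extra detail you supply---why $\rho=\omega$ when the intended codeword uses every color, and the back-and-forth bookkeeping that reconciles injectivity with the infinitary agreement requirements---is a faithful elaboration of what the paper leaves implicit.
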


\begin{proof}
 This is essentially what the proof of theorem \ref{Theorem.Mastermind-no-repetition} shows. If the color space $\Sigma$ is uncountable, he should answer every guessword with $(0,0,\omega)$, and as in the proof of theorem \ref{Theorem.Mastermind-no-repetition} for any countable ordinal $\gamma$ there will be a codeword at stage $\gamma$ that is consistent with all these answers. If alternatively the color space is countable, then he can answer every guessword with $(\omega,\omega,\varepsilon)$, where the inherent error count $\varepsilon$ is either $0$ or $\omega$, depending on whether the guessword uses all the colors or not, and again the argument of theorem \ref{Theorem.Mastermind-no-repetition} shows that for any countable ordinal stage $\gamma$, there will be codewords consistent with those answers.
\end{proof}

\section{The mastermind numbers}\label{Section.Mastermind-numbers}

The \emph{mastermind number} $\mm$, I define, is the size of the smallest winning set for no-duplication infinite Mastermind using words of length $\omega$ over a countably infinite color set.\footnote{The mastermind number $\mm$ should not be confused with the Martin number $\frak{m}(\P)$ for a forcing notion $\P$, defined to be size of the smallest family of dense sets admitting no generic filter, although it will turn out as a consequence of theorem \ref{Theorem.mm=mms=mmeq=d(neq,inj)} that the mastermind number is the same as the Martin axiom number for the forcing to add a Cohen real $\mm=\frak{m}(\C)$.} The duplication-allowed mastermind number $\mm^*$, in contrast, is the size of the smallest winning set for duplication-allowed infinite Mastermind using words of length $\omega$ over a countably infinite color set. In the case of finitely many colors, we denote the corresponding number by $\mm^{*,n}$. These latter numbers, however, are settled by theorem \ref{Theorem.Mastermind-win-by-stage-omega} to have value $\mm^*=\mm^{*,n}=\aleph_0$, whereas in the no-duplication case, theorem \ref{Theorem.Mastermind-no-repetition} shows that $\mm$ is uncountable, and of course it has size at most continuum. $$\mm^*=\mm^{*,n}=\aleph_0<\aleph_1\leq\mm\leq\frak{c}.$$ In this section, I should like to reveal the fundamentally set-theoretic nature of the mastermind number $\mm$ as a cardinal characteristic of the continuum, whose exact value, it turns out, is independent of \ZFC. I take as a goal to relate the mastermind number to the other familiar cardinal characteristics of the continuum, and by the end of the paper, we shall have a complete account of it.

\begin{theorem}\label{Theorem.MA-implies-m=c}
 If Martin's axiom holds, then the mastermind number is the continuum $\mm=\frak{c}$.
\end{theorem}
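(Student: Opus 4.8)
The plan is to show that under $\MA$ no family of fewer than $\continuum$ many guesswords can be a winning set; together with the trivial bound $\mm\leq\continuum$ (the set of all guesswords is winning, since a codeword receives feedback $(\omega,0,0)$ only when guessed against itself, and there are only $\continuum$ many no-duplication guesswords) this yields $\mm=\continuum$. So I fix an arbitrary set $S$ of no-duplication guesswords with $|S|<\continuum$, and I aim to produce two \emph{distinct} codewords $c\neq d$ that are handed identical Mastermind feedback by every $s\in S$, witnessing that $S$ fails to determine the codeword and hence is not winning.

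The key simplification is to search only among codewords that are bijections of $\omega$ with the color set $\Sigma$, that is, sequences using every color exactly once. For such a codeword $w$ and any injective guessword $s$, every color of $s$ already occurs in $w$, so by the analysis of the no-duplication feedback the inherent error $\varepsilon$ depends only on $s$ (it is $0$ if $s$ is itself surjective and $\omega$ otherwise), independently of which bijection $w$ is chosen. Moreover, since $w$ uses every color, every incorrect peg of $s$ can be rearranged into a correct position, so the rearrangement count is exactly the number of disagreements; the feedback is therefore precisely $(\kappa,\rho,\varepsilon)=\bigl(\,|\set{n\mid w(n)=s(n)}|,\ |\set{n\mid w(n)\neq s(n)}|,\ \varepsilon_s\,\bigr)$. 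Consequently two bijection-codewords return the same answer to $s$ whenever each of them agrees with $s$ infinitely often and disagrees with $s$ infinitely often, for then both responses are $(\omega,\omega,\varepsilon_s)$. Thus it suffices to construct distinct bijections $c,d\colon\omega\to\omega$ each having infinite agreement and infinite disagreement with every $s\in S$.

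To build such a bijection I would force with the poset $\P$ of finite partial injections $p\colon\omega\to\omega$, ordered by extension. Since $\P$ is countable it is ccc, so $\MA$ applies. For each $n$ and $m$ the sets requiring $n\in\dom p$ and $m\in\ran p$ are dense, guaranteeing a total surjection; for each $s\in S$ and each $N$ the set $\set{p\mid \exists n\geq N\ p(n)=s(n)}$ is dense, since given $p$ one uses the injectivity of $s$ to find a position $n\geq N$ outside $\dom p$ whose value $s(n)$ lies outside the finite set $\ran p$ and sets $p(n)=s(n)$, forcing infinitely many agreements with $s$; symmetrically $\set{p\mid\exists n\geq N\ p(n)\neq s(n)}$ is dense and forces infinitely many disagreements. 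This is a family of $|S|+\aleph_0<\continuum$ many dense sets, so $\MA$ yields a filter $G$ meeting them all, whose union $c=\Union G$ is a bijection with infinite agreement and infinite disagreement with each $s\in S$. To obtain $d$, I rerun the same construction over the enlarged family $S\union\set{c}$; demanding infinite disagreement with $c$ in particular makes $d\neq c$.

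The resulting $c$ and $d$ are then distinct codewords handing every $s\in S$ the identical feedback $(\omega,\omega,\varepsilon_s)$, so $S$ is not a winning set; as $S$ was an arbitrary family of size below $\continuum$ we conclude $\mm\geq\continuum$ and hence $\mm=\continuum$. The step requiring the most care is the feedback reduction of the second paragraph---verifying that for a bijection codeword the rearrangement count really is the full disagreement count and that the inherent error is determined entirely by the color set of $s$---since everything downstream depends on the feedback to a bijection codeword being controlled solely by its agreement/disagreement pattern with $s$.
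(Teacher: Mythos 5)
Your proposal is correct and follows essentially the same route as the paper: apply Martin's axiom to a countable (hence ccc) Cohen-style forcing with fewer than $\continuum$ many dense sets to obtain generic bijection codewords having infinite agreement and infinite disagreement with every guessword in $S$, then observe that all such codewords receive the identical feedback $(\omega,\omega,\varepsilon_s)$, so $S$ is not winning. The only cosmetic differences are that the paper adds the two generic codewords simultaneously (forcing to add two Cohen reals, with dense sets ensuring distinctness) and cites the feedback analysis from its earlier no-duplication theorem, whereas you run the construction twice---feeding $c$ into the second family to force $d\neq c$---and re-derive the feedback reduction explicitly.
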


\begin{proof}
This amounts to a generalization of the genericity ideas used in the proof of theorem \ref{Theorem.Mastermind-no-repetition}. Suppose we have a set $S$ of guesswords with size less than continuum, $|S|<\continuum$. The forcing to add two Cohen reals, in the form of sequences of distinct colors from the fixed countable set of colors, is certainly c.c.c., and there will be $|S|\cdot\omega$ many dense sets that will ensure that those two Cohen reals are distinct, use all the colors, and have the infinite-agreement/infinite-disagreement property with the respect to the guesswords $s\in S$. So by Martin's axiom, there are indeed such generic codewords, and the proof of theorem \ref{Theorem.Mastermind-no-repetition} shows that they will each give rise to the same Mastermind feedback to those guesswords, and so the given set of guesswords does not distinguish all possible codewords. So it is not winning, and thus every winning set must have size continuum.
\end{proof}

What the proof actually shows is that the mastermind number is at least as large as the Martin number for Cohen forcing $\frak{m}(\C)$, the minimal size of a family of dense sets in Cohen forcing for which there is no generic function meeting them all.

\begin{theorem}
 The mastermind number is at least as large as the Martin number $\frak{m}(\C)$ for the forcing $\C$ to add a Cohen real.
 $$\frak{m}(\C)\quad\leq\quad\mm$$
\end{theorem}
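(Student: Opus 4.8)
The plan is to revisit the proof of Theorem~\ref{Theorem.MA-implies-m=c} and notice that Martin's axiom entered there only to guarantee a single generic filter meeting one explicitly describable family of dense sets in Cohen forcing; by keeping track of the size of that family and appealing to the definition of $\frak{m}(\C)$ in place of Martin's axiom, the bound falls out with no further work. Concretely, I would let $S$ be an arbitrary winning set of guesswords for no-duplication infinite Mastermind and argue that $|S|\geq\frak{m}(\C)$, proceeding by contradiction from the assumption $|S|<\frak{m}(\C)$ and manufacturing two distinct codewords that return identical feedback to every $s\in S$.

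For the forcing I would take the poset $\P$ of pairs $(p,q)$ of finite partial injections from $\omega$ into the color set $\Sigma$, ordered by coordinatewise extension, whose generic object is a pair of injective sequences $c,d\in\inj\Sigma$. Since $\P$ is countable and atomless, it is forcing equivalent to $\C$, so its Martin number is exactly $\frak{m}(\C)$. The dense sets I would meet are: for each color $\sigma$, the requirement that $\sigma$ enter the range of $c$ and of $d$ (forcing both sequences to be surjective, hence to use every color exactly once); one dense set forcing $c\neq d$; and, for each guessword $s\in S$ and each $n$, the requirement of an $n$th further position of agreement and of disagreement with $s$, for each of $c$ and $d$. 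This is a family of $|S|\cdot\omega$ dense sets; because $\C$ is c.c.c., the Rasiowa--Sikorski theorem gives $\frak{m}(\C)>\omega$, so $|S|\cdot\omega=\max(|S|,\omega)<\frak{m}(\C)$, and the definition of $\frak{m}(\C)$ then delivers a filter meeting all of them.

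The generic codewords $c$ and $d$ would then be distinct, injective, surjective, and in infinite agreement and infinite disagreement with each $s\in S$, so by the feedback calculation already carried out in the proof of Theorem~\ref{Theorem.Mastermind-no-repetition} each returns the answer $(\omega,\omega,\varepsilon)$ to $s$, where $\varepsilon=0$ when $s$ uses every color and $\varepsilon=\omega$ otherwise. The key point is that this $\varepsilon$ depends only on $s$, not on whether the codeword is $c$ or $d$, since both use all of $\Sigma$; hence $S$ fails to separate $c$ from $d$, contradicting that it is winning, and therefore every winning set has size at least $\frak{m}(\C)$. I expect the only steps needing genuine care to be the identification of the Martin number of $\P$ with $\frak{m}(\C)$ via forcing equivalence, and the density of the agreement sets, which is exactly where the no-duplication hypothesis is used: that $s$ is itself injective with infinite range guarantees, for any condition, infinitely many positions outside its domain at which $s$ takes a value outside the finite range committed so far, so that agreement can always be extended. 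The remaining density checks and the bookkeeping are routine.
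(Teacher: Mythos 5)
Your proposal is correct and follows essentially the same route as the paper: the paper's own proof of this theorem simply reruns the argument of Theorem~\ref{Theorem.MA-implies-m=c}, viewing Cohen forcing as the poset of finite injective approximations and replacing the appeal to Martin's axiom by the definition of $\frak{m}(\C)$, exactly as you propose; your write-up supplies the dense sets and the cardinal arithmetic that the paper leaves implicit. One omission should be repaired, and it stems from conflating a fully generic filter (whose generic object really is a pair of total sequences) with a filter merely meeting your specified dense sets, which is all that $\frak{m}(\C)$ provides. Because you take conditions to be finite partial injections with arbitrary finite domains (rather than initial segments, as the paper does), your list must also include, for each position $n$, the totality requirement that $n$ enter $\dom(p)\cap\dom(q)$: a filter meeting only the sets you listed could concentrate, say, on conditions whose domains lie among the even positions, producing an injective surjection from a coinfinite set of positions onto $\Sigma$---an object that is not a codeword and, being already surjective, cannot even be extended to an injective one, so the contradiction with $S$ being winning never materializes. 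Adding these countably many further dense sets costs nothing in the count $|S|\cdot\omega$, and with them your argument is complete: the feedback computation $(\omega,\omega,\varepsilon)$, with $\varepsilon$ depending only on whether $s$ uses every color, is exactly as in Theorem~\ref{Theorem.Mastermind-no-repetition}. As for the step you flag about identifying the Martin number of the pair poset with $\frak{m}(\C)$: your caution is reasonable, since Martin numbers are not obviously invariant under bare forcing equivalence (filters on a completion need not pull back to filters on a dense subposet), but the needed inequality $\frak{m}(\P)\geq\frak{m}(\C)$ is standard---every countable atomless poset has Martin number equal to $\cov(\mathcal{M})=\frak{m}(\C)$---and the paper silently relies on the same fact when it applies $\frak{m}(\C)$ to the two-real forcing.
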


\begin{proof}
We view Cohen forcing $\C$ as adding an injective function $g:\omega\to\omega$ by initial segment. The argument of theorem \ref{Theorem.MA-implies-m=c} shows that for any set $S$ of guesswords, if $|S|<\frak{m}(\C)$, then we will find sequences $c$ and $d$ that it does not separate. So the size of the smallest winning set must be at least $\frak{m}(\C)$.
\end{proof}

Let me next define the \emph{simplified mastermind number} $\mms$, which is the size of the smallest winning set in simplified no-duplication infinite Mastermind, using words of length $\omega$ over a countably infinite color set $\Sigma$, with feedback consisting of the correctness and incorrectness counts. We also have the corresponding further simplified mastermind numbers $\mmeq$ and $\mmneq$, for the sizes of the smallest winning sets in correctness-only and incorrectness-only simplified infinite Mastermind, respectively. Precisely because the simplified games are at least as hard for the codebreaker, as they provide less information at each stage, we can easily observe:
$$\mm\quad\leq\quad\mms\quad\leq\quad{\mmeq\atop\mmneq}$$
For the game variation in which duplication of colors in a word is allowed, let me define the corresponding duplication-allowed simplified mastermind numbers $\mms^*$, $\mmeq^*$, and $\mmneq^*$, which are the sizes of the smallest winning sets in the various duplication-allowed simplified Mastermind games. We similarly observe:
$$\mm^*\quad\leq\quad\mms^*\quad\leq\quad{\mmeq^*\atop\mmneq^*}$$
And similarly for the case with finitely many colors, with $\mm^{*,n}$, $\mms^{*,n}$, and so on.

I shall now aim to relate all these numbers to other better-known cardinal characteristics. The \emph{eventually different} number $\frak{d}({\neq^*})$ is the size of the smallest family of functions $S\of{}^\omega\omega$ such that for every $w\in{}^\omega\omega$ there is some $s\in S$ that is eventually different from $w$, meaning that $s$ and $w$ agree on only finitely many bits, $\|s=w\|<\omega$. This is well known to be equal to the covering number $\frak{d}({\neq^*})=\cov(\mathcal{M})$, as proved for example in \cite[theorem 2.4.1]{BartoszynskiJudh1995:Set-theory-on-the-structure-of-the-real-line}. The covering number is also equal to the Martin number $\frak{m}(\C)$ for the forcing $\C$ to add a Cohen real.

\begin{theorem}\label{Theorem.mms*=mmeq*=d(neq)}
The duplication-allowed simplified mastermind numbers, with a countable infinity of colors, are both equal to the eventually different number and therefore also equal to the covering number of the meager ideal and the Martin number for Cohen forcing.
$$\mms^*\quad=\quad\mmeq^*\quad=\quad\frak{d}({\neq^*})\quad=\quad\cov(\mathcal{M})\quad=\quad\frak{m}(\C)$$
\end{theorem}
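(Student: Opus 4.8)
The plan is to prove the two nontrivial equalities $\mms^* = \mmeq^* = \frak{d}({\neq^*})$, since the remaining equalities $\frak{d}({\neq^*}) = \cov(\mathcal{M}) = \frak{m}(\C)$ are the standard facts recalled just above. As the inequality $\mms^* \leq \mmeq^*$ has already been observed, it suffices to establish the lower bound $\frak{d}({\neq^*}) \leq \mms^*$ and the upper bound $\mmeq^* \leq \frak{d}({\neq^*})$; the chain $\frak{d}({\neq^*}) \leq \mms^* \leq \mmeq^* \leq \frak{d}({\neq^*})$ then closes up, forcing all three to coincide. Throughout I identify the color set with $\Sigma = \omega$, so codewords and guesswords are elements of $^\omega\omega$.

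For the lower bound I would replay the genericity argument of Theorem \ref{Theorem.MA-implies-m=c}, but now for the duplication-allowed simplified game. Fixing a set $S$ of guesswords with $|S| < \frak{m}(\C) = \frak{d}({\neq^*})$, I use the ccc forcing to add two Cohen reals $c, d \in {}^\omega\omega$; since duplication is now allowed, no injectivity is required. There are $|S|\cdot\omega$ relevant dense sets, ensuring that $c \neq d$ and that, for each $s \in S$, both $c$ and $d$ agree with $s$ infinitely often and disagree with $s$ infinitely often. By the definition of $\frak{m}(\C)$ such a generic pair exists, and then $c$ and $d$ return the identical simplified feedback $(\omega,\omega)$ to every $s \in S$. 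Hence $S$ is not winning, so every winning set has size at least $\frak{m}(\C) = \frak{d}({\neq^*})$, which gives $\frak{d}({\neq^*}) \leq \mms^*$.

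For the upper bound I would build an explicit winning set for the \emph{correctness-only} game out of an eventually different family $F \of {}^\omega\omega$ of size $\frak{d}({\neq^*})$. For each $f \in F$, each position $n$, and each color $v$, let $f^{n,v}$ be the guessword agreeing with $f$ everywhere except that it takes value $v$ at position $n$ (so $f = f^{n,f(n)}$ is itself among these). I claim $S = \set{f^{n,v} \mid f \in F,\ n \in \omega,\ v \in \omega}$, of size $|F| = \frak{d}({\neq^*})$, is winning. Given the correctness counts against an unknown codeword $w$, first select any $f \in F$ with $\|f = w\| < \omega$, which exists precisely because $F$ is eventually different, and write $m = \|f = w\|$. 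Since $f^{n,v}$ differs from $f$ at the single position $n$, the count $\|f^{n,v} = w\|$ differs from $m$ by at most $1$, so all these counts are finite, and a direct case analysis according to whether $f(n) = w(n)$ shows that the unique color $v$ maximizing $\|f^{n,v} = w\|$ is exactly $w(n)$. Thus the correctness counts recover $w(n)$ at every position, the map $w \mapsto (\|s = w\|)_{s \in S}$ is injective, and $S$ is winning, yielding $\mmeq^* \leq \frak{d}({\neq^*})$.

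The main obstacle is the verification in the upper bound that a single eventually different $f$ simultaneously decodes every position. Concretely, one must confirm that when $f(n) \neq w(n)$ the maximal count $m+1$ is attained only at $v = w(n)$, while when $f(n) = w(n)$ the maximal count $m$ is attained only at $v = f(n) = w(n)$ (the other colors giving $m-1$); in both regimes the maximizer is unique and equals $w(n)$, so the finitely many positions where $f$ accidentally agrees with $w$ cause no trouble. Everything else is routine, including the cardinal arithmetic $|F|\cdot\omega\cdot\omega = |F|$ for the size of $S$ and the observation that the decoding depends only on the feedback data, which is what secures injectivity.
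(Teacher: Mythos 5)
Your proposal is correct, and the chain of inequalities you close ($\frak{d}({\neq^*})\leq\mms^*\leq\mmeq^*\leq\frak{d}({\neq^*})$) is exactly the cycle the paper uses; your upper bound is essentially the paper's argument, since the paper also takes all finite variations of members of an eventually different family and decodes each coordinate by watching the correctness count tick up to $k+1$ or down to $k-1$, which is precisely your argmax analysis of the one-coordinate variations $f^{n,v}$. Where you genuinely diverge is the lower bound $\frak{d}({\neq^*})\leq\mms^*$. The paper proves it by a direct combinatorial reduction: given a winning set $S$, adjoin to each $s\in S$ a sequence $\bar s$ having no position of agreement with $s$; if some codeword $w$ had infinite agreement with every member of $S\cup\set{\bar s\mid s\in S}$, then (since agreement with $\bar s$ forces disagreement with $s$) $w$ and all of its finite variations would return the identical feedback $(\omega,\omega)$ to every $s\in S$, contradicting winningness---so $S\cup\set{\bar s\mid s\in S}$ is itself an eventually different family of size $|S|$. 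You instead replay the Cohen-forcing genericity argument of theorem~\ref{Theorem.MA-implies-m=c} to get $\frak{m}(\C)\leq\mms^*$ and then invoke the identity $\frak{m}(\C)=\cov(\mathcal{M})=\frak{d}({\neq^*})$. Both routes are valid, but note the trade-off: your argument makes the nontrivial cited equivalence $\frak{d}({\neq^*})=\cov(\mathcal{M})$ (theorem 2.4.1 of Bartoszy\'nski--Judah) load-bearing even for the equality $\mms^*=\mmeq^*=\frak{d}({\neq^*})$, whereas the paper's combinatorial reduction establishes those first three equalities self-containedly, with the citation needed only to attach $\cov(\mathcal{M})$ and $\frak{m}(\C)$ to the chain. (Your count of $|S|\cdot\omega$ dense sets also silently uses that $\frak{m}(\C)$ is uncountable, so that the extra countably many dense sets guaranteeing totality of $c,d$ and $c\neq d$ stay below the bound; this is harmless, as it follows from the Rasiowa--Sikorski lemma.)
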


\begin{proof}
We have already observed above that $\mms^*\leq\mmeq^*$, so let me prove next that $\mmeq^*\leq\frak{d}({\neq^*})$. Suppose that $D\of{}^\omega\omega$ is an eventually different family, a family of functions such that every function is eventually different from an element of $D$, and let $D^*$ be the collection of all finite variations of members of $D$. I claim that $D^*$, which has the same size as $D$, is a correctness-only simplified Mastermind winning set of guesswords. To see this, consider any codeword $w$. Let $s$ be in the family $D$ and eventually different from $w$. The agreement count $\|s=w\|$, therefore, is finite---it is some finite number $k$. By considering the agreement of $w$ with the finite variations of $s$, changing on one coordinate only, we can notice when the agreement changes to $k+1$ or $k-1$ and thereby come to know which color appears at that coordinate in $w$. So $w$ is determined by its correctness counts on the finite variations of $s$, and so $D^*$ is a winning correctness-only simplified Mastermind guessing set of the same size as $D$, which establishes $\mmeq^*\leq\frak{d}({\neq^*})$.

We can complete a cycle of inequalities by proving $\frak{d}({\neq^*})\leq\mms^*$. Suppose that $S$ is a winning duplication-allowed simplified Mastermind guessing set. For each $s\in S$, let $\bar s$ be some other color sequence that has no bits of agreement with $s$, and let $\bar S$ be the set obtained from $S$ by adding all these new sequences. I claim that $\bar S$ is an eventually different set. To see this, suppose that a codeword $w$ is not eventually different from any element of $\bar S$. In particular, it must have infinite agreement with every element of $S$, meaning $\|w=s\|=\omega$ for all $s\in S$ and also infinite agreement with $\bar s$. But since $s$ and $\bar s$ have no agreement at all, $w$ must have infinite disagreement with $s$, meaning $\|w\neq s\|=\omega$. Thus, $w$ has infinite correctness and incorrectness counts with every element of $S$. But in this case, every finite variation of $w$ would also have those infinite counts, and this would contradict the assumption that $S$ was winning for simplified Mastermind, since $S$ would not separate $w$ from its finite variations. So $\bar S$ is an eventually different family, with the same size as $S$, and consequently $\frak{d}({\neq^*})\leq\mms^*$.

The eventually different number $\frak{d}({\neq^*})$, as I had mentioned, is the same as the covering number $\cov(\mathcal{M})$. It is easy to see that $\cov(\mathcal{M})\leq\frak{d}({\neq^*})$, since from any eventually different family, we can construct a covering family of meager sets, since being eventually different from a fixed sequence is a meager property (as having $n$ points of agreement is an open dense property and so having infinitely many points of agreement is co-meager). For the more difficult converse relation $\frak{d}({\neq^*})\leq\cov(\mathcal{M})$, I refer the reader to \cite[theorem 2.4.1]{BartoszynskiJudh1995:Set-theory-on-the-structure-of-the-real-line}.

Finally, the covering number is also equal to the Martin number $\frak{m}(\C)$ for the forcing to add a Cohen real. This is easy to see, since a collection of open dense sets in $\C$, viewed as adding a function $g:\omega\to\omega$, has as complements a collection of closed nowhere dense sets in $^\omega\omega$, which is covering if and only if the original collection admits no generic function. So $\cov(\mathcal{M})\leq\frak{m}(\C)$. Conversely, any covering of $^\omega\omega$ by meager sets can be decomposed into a covering by closed nowhere dense sets, whose complements are a family of open dense sets admitting no generic. So $\cov(\mathcal{M})=\frak{m}(\C)$.
\end{proof}

Essentially the same argument works in the case of finitely many colors, using the eventually different number $\frak{d}({\neq^*},{}^\omega n)$ for the space $^\omega n$ of sequences using $n$ colors. But it turns out that this number is identical to the continuum.

\begin{theorem}\label{Theorem.mms*,n=mmeq*,n=d(neq)}
In duplication-allowed simplified Mastermind with finitely many colors $n\geq 2$, the corresponding simplified mastermind numbers are equal to the eventually different number for $^\omega n$, which is equal to the continuum.
$$\mms^{*,n}\quad=\quad\mmeq^{*,n}\quad=\quad\frak{d}({\neq^*},{}^\omega n)\quad=\quad\continuum$$
\end{theorem}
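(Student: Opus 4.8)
The plan is to prove the three equalities separately: first the chain $\mms^{*,n}=\mmeq^{*,n}=\frak{d}({\neq^*},{}^\omega n)$, reusing the method of theorem~\ref{Theorem.mms*=mmeq*=d(neq)} essentially verbatim in the finite-color setting, and then the identity $\frak{d}({\neq^*},{}^\omega n)=\continuum$, which is where the finiteness of the alphabet does its real work. The upper bound $\frak{d}({\neq^*},{}^\omega n)\leq\continuum$ is immediate, since the whole space ${}^\omega n$ is trivially an eventually different family: for any $w$, the sequence obtained from $w$ by replacing each color by its successor modulo $n$ is everywhere, hence eventually, different from $w$. So the content of that identity is the reverse inequality $\frak{d}({\neq^*},{}^\omega n)\geq\continuum$.

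For the chain of equalities I would first note $\mms^{*,n}\leq\mmeq^{*,n}$, since the correctness-only game supplies strictly less feedback and is therefore at least as hard. Next, $\mmeq^{*,n}\leq\frak{d}({\neq^*},{}^\omega n)$ by the finite-variation construction: given an eventually different family $D\of{}^\omega n$, its closure $D^*$ under finite modifications has the same cardinality and is a correctness-only winning set, because if $s\in D$ is eventually different from a codeword $w$ then $\|s=w\|$ is some finite $k$, and toggling $s$ through all $n$ colors at a single coordinate $m$ reveals $w(m)$ as the unique color maximizing the resulting correctness count. Finally $\frak{d}({\neq^*},{}^\omega n)\leq\mms^{*,n}$ by augmenting any simplified winning set $S$ with, for each $s\in S$, a sequence $\bar s$ agreeing nowhere with $s$ (available since $n\geq 2$): if some $w$ were not eventually different from any member of $S\cup\{\bar s:s\in S\}$, then $w$ would have infinite agreement and infinite disagreement with every $s\in S$, whence $S$ could not separate $w$ from its finite variations, contradicting that $S$ is winning. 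These three inequalities close the cycle, each step transferring from theorem~\ref{Theorem.mms*=mmeq*=d(neq)} without change.

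The heart of the matter is $\frak{d}({\neq^*},{}^\omega n)\geq\continuum$, and here the clean case is $n=2$. For two colors, $w$ is eventually different from $s$ precisely when $w(k)=\bar s(k)$ for all large $k$, that is, when $w$ is a finite modification of the pointwise complement $\bar s$. Thus $E_s=\set{w\mid w\text{ eventually different from }s}$ is simply the eventual-equality class of $\bar s$, which is countable. An eventually different family $S$ must satisfy $\Union_{s\in S}E_s={}^\omega 2$, but a union of $|S|$ countable sets has size at most $|S|$, so covering the continuum-sized space forces $|S|=\continuum$. The plan for general finite $n$ is to run the same covering count against a carefully chosen family $P\of{}^\omega n$ of continuum-many codewords, arranged so that each single guessword $s$ is eventually different from only countably many (or at least fewer than continuum) members of $P$; then a covering family $S$ must still cover $P$ through the small traces $E_s\intersect P$, again forcing $|S|=\continuum$.

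The main obstacle is exactly this last step once $n\geq 3$. Unlike the two-color case, a guessword now has $n-1\geq 2$ admissible ``avoiding'' colors at each coordinate, so $E_s$ is no longer countable---it has size continuum---and the naive count collapses; note also that the purely generic construction used in theorem~\ref{Theorem.Mastermind-no-repetition} only yields a lower bound of $\cov(\mathcal{M})$, which need not be $\continuum$. The difficulty is structural: at any coordinate where the codewords of $P$ take only a proper subset of the colors, a guessword can select an unused color and thereby become eventually different from a large portion of $P$ at once. Engineering a continuum-sized codeword family $P$ that is simultaneously thin against every guessword (keeping each $E_s\intersect P$ small) while still spreading across all $n$ colors densely enough to deny the guesswords this escape route is the crux, and it is the step I would expect to demand by far the most care; a successful construction of such a family, or a tailored variant exploiting that the candidate winning set has size $<\continuum$, is what the whole lower bound for $n\geq 3$ rests upon.
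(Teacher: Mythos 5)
Your reduction of the theorem to the single identity $\frak{d}({\neq^*},{}^\omega n)=\continuum$ is exactly right, and your treatment of the cycle $\mms^{*,n}\leq\mmeq^{*,n}\leq\frak{d}({\neq^*},{}^\omega n)\leq\mms^{*,n}$ matches the paper's, which likewise just transfers theorem~\ref{Theorem.mms*=mmeq*=d(neq)} to the finite-alphabet setting. Your two-color case is also correct and clean: for $n=2$ the set $E_s$ of words eventually different from $s$ is the countable eventual-equality class of the complement $\bar s$, and the covering count finishes it.

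But for $n\geq 3$ you have not given a proof; you have given an accurate description of the obstacle and then stopped at precisely the point where the paper's argument (due to Corey Switzer) begins, so this is a genuine gap. The missing idea is that the trace $E_f\intersect P$ need not be made countable---it can be made \emph{finite}, of size at most $n-1$, by building $P$ as a perfect set with the following mutual genericity property: any $n$ distinct members $g_0,\ldots,g_{n-1}\in P$ realize all $n$ colors at infinitely many coordinates $k$, that is, $\set{g_i(k)\mid i<n}=n$ infinitely often. Such a $P$ is obtained by growing a tree of finite approximations, handling each tuple of requirements one by one via bookkeeping, and taking $P$ to be the set of branches. Granting the property, no single $f$ can be eventually different from $n$ distinct members of $P$: at infinitely many large coordinates $k$ the values $g_0(k),\ldots,g_{n-1}(k)$ exhaust all $n$ colors, so $f(k)$ must agree with one of them, and by pigeonhole $f$ agrees infinitely often with some fixed $g_i$. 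Hence $|E_f\intersect P|\leq n-1$ for every $f$, so an eventually different family $D$ of size less than $\continuum$ could not cover the continuum-sized set $P$ by the sets $E_f\intersect P$, a contradiction. Note that the thinness here comes not from controlling each guessword separately---which is impossible, since as you observe $E_s$ itself has size continuum once $n\geq 3$---but from an internal combinatorial property of the family $P$ that caps at $n-1$ the number of its members any single word can avoid; that cap is exactly where the finiteness of the alphabet does its work.
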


\begin{proof}
The cycle of inequalities proved in theorem \ref{Theorem.mms*=mmeq*=d(neq)} works almost identically to establish
$\mms^{*,n}\leq\mmeq^{*,n}\leq\frak{d}({\neq^*,{}^\omega n})\leq\mms^{*,n}$, which gives the first two equality claims of this theorem.

What remains is to prove that $\frak{d}({\neq^*},{}^\omega n)=\continuum$. This part of the argument is due to Corey Switzer, who has graciously allowed me to include it. Of course the eventually different number is at most continuum, so for the converse let us suppose we have an eventually different family $D\of{}^\omega n$ of size less than the continuum. We can build a perfect set $P\of{}^\omega n$ for which any distinct $n$ functions $g_0,\ldots,g_{n-1}\in P$ admit infinitely many points $k$ realizing all distinct values $g_i(k)$, that is, all $n$ colors appear at this coordinate $\set{g_i(k)\mid i<n}=n$. This is a simple mutual genericity property, which we can ensure for a perfect set of such functions by growing a tree of approximations to those functions, extending so as to realize the requirements one by one, and letting $P$ be the paths through this tree. Since $P$ is perfect, it has size continuum, and since every $g\in P$ is eventually different from some $f\in D$ and $D$ has size less than the continuum, it follows that some $f\in D$ must be eventually different from at least $n$ (actually uncountably many) distinct $g_0,\ldots,g_{n-1}\in P$. So for large enough $k$, we have $f(k)\neq g_i(k)$ for each $i<n$. But by the mutual genericity property, there will be large values of $k$ for which the values $g_i(k)$ already use up all $n$ colors, and so it is impossible for them all to be different from $f(k)$. This is a contradiction, and so $\frak{d}({\neq^*},{}^\omega n)=\continuum$, as claimed.
\end{proof}

Let us now consider the mastermind number and the simplified no-duplication mastermind numbers, that is, in the case of injective sequences.

\begin{theorem}\label{Theorem.mm=mms=mmeq=d(neq,inj)}
The mastermind number $\mm$, as well as the no-duplication simplified mastermind numbers $\mms$ and $\mmeq$, are all equal to the eventually-different number for injective Baire space $\frak{d}({\neq^*},\inj\omega)$, which is equal to the covering number $\cov(\mathcal{M})$ and the Martin number $\frak{m}(\C)$.
$$\mm\quad=\quad\mms\quad=\quad\mmeq\quad=\quad\frak{d}({\neq^*},\inj\omega)\quad=\quad\cov(\mathcal{M})\quad=\quad\frak{m}(\C)$$
\end{theorem}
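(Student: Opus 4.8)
The plan is to run a single cycle of inequalities threading all three Mastermind numbers through the injective eventually different number, and then to identify that number with $\cov(\mathcal{M})$. Two of the links are already in hand: the chain $\mm\leq\mms\leq\mmeq$ was recorded earlier (providing less feedback only makes the codebreaker's task harder), and the material above gives both $\frak{m}(\C)\leq\mm$ and $\cov(\mathcal{M})=\frak{m}(\C)$. So it remains to prove $\mmeq\leq\frak{d}({\neq^*},\inj\omega)$ and $\frak{d}({\neq^*},\inj\omega)=\cov(\mathcal{M})$, after which the combined chain $\mmeq\leq\frak{d}({\neq^*},\inj\omega)=\cov(\mathcal{M})=\frak{m}(\C)\leq\mm\leq\mms\leq\mmeq$ collapses to equality throughout, delivering all of the claimed identifications at once.

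For $\mmeq\leq\frak{d}({\neq^*},\inj\omega)$ I would imitate the finite-variation argument from the proof of Theorem~\ref{Theorem.mms*=mmeq*=d(neq)}, but taking care to stay inside $\inj\omega$. Starting from an eventually different family $D\of\inj\omega$, let $D^*$ be the set of all finite injective variations of members of $D$, a set of the same cardinality. Given an injective codeword $w$, choose $s\in D$ eventually different from $w$, so that $\|s=w\|$ is some finite number $k$; the claim is that the correctness counts of $w$ against the injective variations of $s$ pin $w$ down coordinatewise. Probing a coordinate $n$ by changing $s(n)$ to a fresh color detects at once whether $w(n)$ equals that color or equals $s(n)$. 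The one genuinely new wrinkle over the duplication-allowed case is the possibility $w(n)\in\ran(s)\setminus\set{s(n)}$, which a single-coordinate change cannot reach while preserving injectivity; this I would resolve by a two-coordinate move that relocates $s(n)$ to the target value and vacates the displaced position to a fresh color, reading off the intended bit from the count for all but finitely many choices of that fresh color. A short case analysis then shows $D^*$ separates any two distinct injective codewords, so it is a correctness-only winning set of size $|D|$.

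The heart of the matter is the equality $\frak{d}({\neq^*},\inj\omega)=\cov(\mathcal{M})$, i.e.\ that restricting to injective sequences does not move the cardinal. The inequality $\cov(\mathcal{M})\leq\frak{d}({\neq^*},\inj\omega)$ is soft: for fixed injective $s$, the set of injective $w$ with $\|w=s\|<\omega$ is meager in $\inj\omega$, since having at least $m$ points of agreement with $s$ is an open dense condition (as $s$ is injective and only finitely many colors are committed at any finite stage), so having infinitely many agreements is comeager; an eventually different family then covers $\inj\omega$ by that many meager sets, and $\inj\omega$ is a perfect Polish space homeomorphic to Baire space, whose meager covering number is $\cov(\mathcal{M})$. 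For the reverse inequality I would reduce to the classical identity $\frak{d}({\neq^*})=\cov(\mathcal{M})$ of \cite[theorem~2.4.1]{BartoszynskiJudh1995:Set-theory-on-the-structure-of-the-real-line}: fixing a bijective pairing $\langle\cdot,\cdot\rangle\colon\omega\times\omega\to\omega$, send each $f\in{}^\omega\omega$ to the injective sequence $s_f(n)=\langle n,f(n)\rangle$. If $F$ is an ordinary eventually different family for ${}^\omega\omega$, then $\set{s_f\mid f\in F}$ is an injective eventually different family of the same size: given injective $w$, set $g_w(n)$ to be the second coordinate of $w(n)$ when $w(n)$ lies in the $n$-th column of the pairing and $0$ otherwise, pick $f\in F$ eventually different from $g_w$, and observe that $s_f(n)=w(n)$ forces $f(n)=g_w(n)$, so $\set{n\mid s_f(n)=w(n)}$ is finite.

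I expect the main obstacle to be exactly this reverse inequality $\frak{d}({\neq^*},\inj\omega)\leq\cov(\mathcal{M})$: injectivity is a global constraint, so one cannot transport an ordinary eventually different family coordinatewise, and it is the pairing reduction above (or, alternatively, a direct Bartoszyński-style extraction of an injective eventually different family from a meager cover) that makes the passage to the injective space legitimate. The bookkeeping in the $\mmeq$ step is the secondary place demanding care, precisely because the finite variations there must be kept injective and the case $w(n)\in\ran(s)$ must be handled by a compensating two-coordinate move rather than a single change.
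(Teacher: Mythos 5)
Your proof is correct, and it closes the cycle of inequalities through a genuinely different link than the paper does. The paper's own cycle is $\mm\leq\mms\leq\mmeq\leq\frak{d}({\neq^*},\inj\omega)\leq\mm$, where the closing inequality $\frak{d}({\neq^*},\inj\omega)\leq\mm$ is proved directly and combinatorially: given a winning no-duplication Mastermind set $S$, one adjoins for each $s\in S$ a permutation $\bar s$ of $s$ agreeing with $s$ nowhere, and argues that $S\cup\set{\bar s\mid s\in S}$ is an eventually different family, since any codeword with infinite agreement against every member of the enlarged family would receive feedback $(\omega,\omega,\varepsilon)$ from every $s\in S$, with $\varepsilon$ depending only on the color sets, and this feedback is invariant under finite color-preserving variations of the codeword, contradicting that $S$ is winning. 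You instead close the cycle through the Martin number, citing the earlier results $\frak{m}(\C)\leq\mm$ (the Cohen-forcing genericity argument) and $\cov(\mathcal{M})=\frak{m}(\C)$, giving $\mmeq\leq\frak{d}({\neq^*},\inj\omega)=\cov(\mathcal{M})=\frak{m}(\C)\leq\mm\leq\mms\leq\mmeq$. This is legitimate and non-circular, since both cited facts are established before this theorem; your route is more economical in reusing them, while the paper's route is self-contained at this point and shows without any appeal to forcing that the full Mastermind feedback (rearrangement and error counts included) gives the codebreaker no advantage below the eventually different number---which is what makes $\mm$ itself, not just $\mmeq$, sit at the bottom of a purely combinatorial argument. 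Your remaining steps essentially coincide with the paper's: the identity $\frak{d}({\neq^*},\inj\omega)=\cov(\mathcal{M})$ is proved there exactly as you propose, via the pairing map $f\mapsto\<n,f(n)>$ in one direction and, in the other, the meagerness of eventual difference together with the homeomorphism of $\inj\omega$ with Baire space (which the paper constructs explicitly by the cast-out-used-values bijection, where you assert it from general Polish-space facts). In the step $\mmeq\leq\frak{d}({\neq^*},\inj\omega)$, your probe for the case $w(n)\in\ran(s)\setminus\set{s(n)}$---relocating $s(m)$ to position $n$ and vacating position $m$ to a fresh color, reading the answer from all but finitely many fresh choices---differs from the paper's use of transpositions of two bits of $s$, but both case analyses are sound and yield the same conclusion that the injective finite variations of $s$ determine the codeword.
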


\begin{proof}
Once again we have $\mm\leq\mms\leq\mmeq$ simply because the simplified games provide progressively less information to the codebreaker. To prove $\mmeq\leq\frak{d}({\neq^*},\inj\omega)$, we argue in analogy with the proof of theorem \ref{Theorem.mms*=mmeq*=d(neq)}. Namely, if $D\of\inj\omega$ is an eventually different family of injective functions, a family of injective functions such that every injective function is eventually different from one of them, let $D^*$ be the collection of all injective finite variations of a member of $D$. I claim again that $D^*$ is a correctness-only Mastermind winning set of guesswords. Any given injective codeword $w$ is eventually different from some $s\in D$, so the correctness count $\|s=w\|$ has some finite value $k$. What I claim is that $w$ is determined by its correctness counts with the injective finite variations of $s$. This is a little more subtle in the injective case than it was in the duplication-allowed case, since we can't arbitrarily change one color to another, as this might violate injectivity. Nevertheless, let us consider any particular color bit of $s$---suppose it is red in $s$. We can first change that red bit to any particular color not yet appearing in $s$, and if we ever observe a change in the correctness count from $k$ to $k+1$ or to $k-1$, we will know the correct color value of that bit. But perhaps none of those values changed (or perhaps all colors appear already in $s$), and the correct color appears elsewhere in $s$. Let us consider all possible swaps of that red bit with another color bit of $s$, a transposition of two bits. If the red bit were correct in $s$, then after the swap both colors would now be wrong, and so all such swaps would cause a change in the correctness count from $k$ either to $k-1$ or $k-2$, depending on whether the other bit in the swap had been incorrect or correct; but if the red bit were incorrect, then at least one swap would put the right color into that spot (from a previously incorrect spot) or place the wrong color from that spot into its right spot, in both cases causing a change in correctness from $k$ to $k+1$ or $k+2$. If the correctness count went up by $2$, then the new colors are both correct; but if it only went up by $1$, exactly one of them is correct. But we can tell which one by considering all possible further swaps for each of those bits separately. In this way, we see that $w$ is determined by the correctness counts on the various finite modifications of $s$, and so $D^*$ is a winning no-duplication correctness-only simplified Mastermind set. Since $D^*$ has the same size as $D$, we conclude that $\mmeq\leq\frak{d}({\neq^*},\inj\omega)$.

Let me complete a cycle by proving $\frak{d}({\neq^*},\inj\omega)\leq\mm$, which will show that the first four cardinals of the theorem are equal. Suppose that $S$ is a winning no-duplication Mastermind guessing set. For each $s\in S$, let $\bar s$ be a permutation of $s$ that has no bits of agreement with $s$, and let $\bar S$ be the set obtained from $S$ by adding these new sequences. I claim that $\bar S$ is an eventually different set. To see this, suppose that a codeword $w$ is not eventually different from any element of $\bar S$. In particular, it must have infinite correctness count with every element of $S$, meaning $\|w=s\|=\omega$ for all $s\in S$. Since also it will have infinite agreement with $\bar s$, it follows that the rearrangement count with $s$ will be infinite. Furthermore, the inherent error count between $w$ and $s$ depends solely on the sets of colors appearing in these sequences, since it has finite value $k$ if and only if there are exactly $k$ colors appearing in each of them not in the other, but otherwise it will be $\omega$. Each of these Mastermind feedback counts is unchanged by finite variations of $w$ (using the same colors), and so the guessing set $S$ will not distinguish $w$ from these finite variations, contradicting our assumption that it was a no-duplication Mastermind winning set. Therefore, $w$ must have only finite disagreement with some sequence $s\in S$ or $\bar s\in\bar S$. So $w$ is eventually equal to $s$ or $\bar s$, and consequently is eventually different from the other one. So $\bar S$ is an eventually different family of the same size as $S$, and so $\frak{d}({\neq^*},\inj\omega)\leq\mm$.

Finally, we make the connection with the covering number. For this, let us begin by showing $\frak{d}({\neq^*},\inj\omega)\leq\frak{d}({\neq^*})$. Suppose that we have an eventually different family $D\of{}^\omega\omega$, a family of functions such that every function is eventually different from a member of $D$. For each $f\in D$, let $\bar f(n)=\<n,f(n)>$, using a pairing function on $\omega$. Since we can recover $n$ from $\bar f(n)$, the function $\bar f$ is injective. Let $\bar D=\set{\bar f\mid f\in D}$. I claim that this is eventually different for the injective Baire space. Suppose $g:\omega\to\omega$ is injective, and let $g^*(n)=k$, if $g(n)=\<n,k>$, otherwise $0$. So $g^*:\omega\to\omega$. Since $D$ was eventually different, there is some $f\in D$ that is eventually different from $g^*$. Thus, $f(n)\neq g^*(n)$ for all large enough $n$. From this, it follows that $\<n,f(n)>\neq g(n)$ for all large enough $n$, and consequently, $\bar f$ is eventually different from $g$, as desired. Thus we have produced an eventually different family for injective functions with the same size as $D$, and so $\frak{d}({\neq^*},\inj\omega)\leq\frak{d}({\neq^*})$. And we have already discussed earlier the fact that $\frak{d}({\neq^*})=\cov(\mathcal{M})$.

Next, let me observe that the covering number for Baire space is the same as the covering number for the space of injective functions in Baire space, that is, $\cov(\mathcal{M})=\cov(\mathcal{M},\inj\omega)$. For any function $f:\omega\to\omega$, let $\tilde f(n)$ be the $f(n)$th element of the set $\omega\setminus\ran(\tilde f\restrict n)$. That is, whenever we use a value as $\tilde f(k)$ for $k<n$, it is cast out of the set of possibilities for $\tilde f(n)$, and so $\tilde f$ is injective. From $\tilde f$ we can reconstruct $f$ and vice versa, and indeed every injective function arises as $\tilde f$ for some function $f$. So we have a bijection from Baire space to injective Baire space, and it is continuous and the inverse is continuous, because from an initial segment of $f$ we can read off the corresponding initial segment of $\tilde f$ and conversely. So this is a homeomorphism of ${}^\omega\omega$ with $\inj\omega$, and since the covering number is determined by the topology, the two spaces have the same covering number.

Finally, I prove $\cov(\mathcal{M},\inj\omega)\leq\frak{d}({\neq^*},\inj\omega)$ in analogy with the non-injective case, which will show all the cardinals equal, since we already know that $\cov(\mathcal{M})=\frak{m}(\C)$. Suppose that $D\of\inj\omega$ is an eventually different family of injective functions, meaning that every injective function is eventually different from one of them. But the collection of functions that are eventually different from a given function $f$ is a meager set, since it is an open dense property to have at least $n$ places of agreement with $f$, and hence co-meager to have infinitely many points of agreement. Thus, from $D$ we can construct a covering family of meager sets of the same size as $D$, and so $\cov(\mathcal{M},\inj\omega)\leq\frak{d}({\neq^*},\inj\omega)$.
\end{proof}\goodbreak

Let me settle the issue for the incorrectness-only variation, which it turns out is much harder to win, requiring a guessing set of the same size as the space of all color sequences. One might as well just guess every codeword in turn.

\begin{theorem}
The incorrectness-only mastermind numbers, whether or not duplication of colors is allowed and for any countably collection of at least two colors, are all equal to the continuum.
$$\mmneq^{*,n}\quad=\quad\mmneq^*\quad=\quad\mmneq\quad=\quad\frak{c}$$
\end{theorem}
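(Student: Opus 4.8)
The plan is to prove the three lower bounds $\continuum\leq\mmneq^{*,n}$, $\continuum\leq\mmneq^*$, and $\continuum\leq\mmneq$, the reverse inequalities being immediate: in each variation the codeword space has size $\continuum$, and guessing every codeword in turn is a winning set (when the true codeword is played one gets incorrectness count $0$), so every mastermind number here is at most $\continuum$. The crux is the observation that in the incorrectness-only game the feedback $\|s\neq w\|$ is genuinely informative only when it is finite: a finite incorrectness count means precisely that $w$ agrees with the guessword $s$ on all but finitely many coordinates, that is, $w$ is eventually equal to $s$. For each fixed guessword $s$ there are only countably many such $w$, since there are countably many finite coordinate sets on which to disagree and, over a countable color set, only countably many ways to disagree on them; the injectivity constraint in the no-duplication case only shrinks this collection further.

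First I would fix a purported winning set $S$ with $|S|<\continuum$ and set $X_S=\set{w\mid \|s\neq w\|<\omega\text{ for some }s\in S}$, the codewords pinned down by some finite response. By the counting just described, each $s\in S$ contributes only countably many elements to $X_S$, so $|X_S|\leq|S|\cdot\aleph_0<\continuum$. Hence the complement—the set of codewords $w$ with $\|s\neq w\|=\omega$ for every $s\in S$—still has size $\continuum$. Every such codeword elicits the constant feedback $\omega$ to all of $S$, so any two distinct members $c\neq d$ of this set receive identical incorrectness counts against every guessword of $S$. Consequently the feedback map $w\mapsto(\|s\neq w\|)_{s\in S}$ is not injective on the codeword space, and $S$ fails to be a winning set. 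This shows that every winning set must have size $\continuum$, which yields the three lower bounds and hence all the equalities.

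The argument is uniform across the three variations, since in each case the codeword space—${}^\omega n$ for $n\geq 2$ colors, ${}^\omega\Sigma$ for countably many colors, and $\inj\omega$ for the no-duplication game—has size $\continuum$, while the eventually-equal slice over any single guessword remains countable. I expect no serious obstacle; the only points requiring care are the cardinality bookkeeping (that $X_S$ is small) and, in the no-duplication case, confirming that after discarding $X_S$ there genuinely remain $\continuum$-many \emph{injective} codewords infinitely different from every $s\in S$, which follows because $\inj\omega$ has size $\continuum$ and each finite-disagreement slice is countable. The conceptual heart is simply that incorrectness-only feedback separates a codeword from the field only when the guess is eventually equal to it, so a family of fewer than $\continuum$ guesses can resolve fewer than $\continuum$ codewords.
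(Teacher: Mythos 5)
Your proof is correct and takes essentially the same approach as the paper's: both arguments rest on the observation that incorrectness-only feedback is informative only when it is finite, i.e.\ only when the codeword is eventually equal to the guessword, so that any two codewords infinitely different from every guessword receive the identical constant feedback $\omega$ and are therefore indistinguishable. The only difference is the direction of the bookkeeping---the paper observes that a winning set must meet each of the continuum many finite-difference classes, while you bound the union of the countable eventually-equal slices covered by a set $S$ of size less than $\continuum$---but these are the same counting argument read contrapositively.
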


\begin{proof}
In the incorrectness-only variation of the game, the codebreaker receives information at each stage only about the incorrectness count between the guessword $s$ and the codeword $w$
 $$\|s\neq w\|=|\set{n\mid s(n)\neq w(n)}|.$$
Suppose that $S$ is a successful Mastermind guessing set for this game---every codeword is determined by the pattern of its incorrectness counts with the guesswords in $S$ (and the same argument will work here whether or not duplication is allowed). What I claim is that for every possible codeword $w$, there must be some guessword $s\in S$ that has at most finitely many disagreements with $w$. Otherwise, every $s$ would be infinitely different from $w$, and this would be furthermore true of any finite variation of $w$. But this would mean that the family $S$ would not distinguish $w$ from its finite variations, since they would all get the same incorrectness counts $\|s\neq w\|=\omega$. Therefore, $S$ must contain a guessword that differs only finitely from $w$. But since there are continuum many finite-difference classes, it follows that $S$ has size continuum.
\end{proof}

So far, we've considered winning sets of guesswords merely as sets. The codebreaker could use such a set to win the game by playing every guessword in turn, and the results would determine the true codeword. But actually, the codebreaker need not play all the guesswords, since she gets feedback during play, and so I'd like to know how quickly the codebreaker can force a win, using the feedback that is gained during play. For this purpose, let us define the \emph{strategic} mastermind number $\mmstr$ to be the first ordinal stage $\gamma$ for which the codebreaker has a winning strategy ensuring the win occurs before stage $\gamma$. I similarly define for each game variant the corresponding strategic master mind numbers, $\mmstr^*$ for duplication-allowed infinite Mastermind, $\mmstr^{*,n}$ for $n$ colors, $\mmstrs$ for simplified Mastermind, and so on.

It turns out, however, that except for the case of guessing the codeword at stage $\omega$, the strategic mastermind numbers align exactly with the corresponding mastermind numbers.

\begin{theorem}\
\begin{enumerate}
  \item  The duplication-allowed strategic mastermind number is the ordinal $\omega+1$.
  $$\mmstr^*\quad=\quad\mmstr^{*,n}\quad=\quad\omega+1$$
  \item  The strategic mastermind number and strategic simplified numbers are the same as the mastermind number
  $$\mmstr\quad=\quad\mmstrs\quad=\quad\mmstreq\quad=\quad\mm\quad=\quad\cov(\mathcal{M})\quad=\quad\frak{m}(\C)$$
  \item The strategic duplication-allowed simplified mastermind numbers are the same as the simplified mastermind number
  $$\mmstrs^*\quad=\quad\mmstreq^*\quad=\quad\mms^*\quad=\quad\cov(\mathcal{M})\quad=\quad\frak{m}(\C)$$
  \item The corresponding fact is also true in the case of finitely many colors
  $$\mmstrs^{*,n}\quad=\quad\mmstreq^{*,n}\quad=\quad\mms^{*,n}\quad=\quad\continuum$$
 \item And the strategic incorrectness-only mastermind numbers are also the continuum
  $$\mmstrneq^*=\mmstrneq^{*,n}=\continuum$$
\end{enumerate}
\end{theorem}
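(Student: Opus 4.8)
The plan is to prove each displayed chain by combining two uniform techniques—an adaptive strategy extracted from a small winning set for the upper bounds, and a single ``stonewalling'' play for the lower bounds—while exploiting the monotonicity that providing less feedback only makes the game harder, so that $\mmstr\leq\mmstrs\leq\mmstreq$ and likewise in the starred and finite-color variants. Thus in each part it suffices to bound the two extreme members: an upper bound on the most-starved (correctness-only or incorrectness-only) strategic number and a lower bound on the most-generous one, after which the intermediate equalities are squeezed and the identifications with $\mm$, $\cov(\mathcal{M})$, $\frak{m}(\C)$ and $\continuum$ are quoted from Theorems~\ref{Theorem.mm=mms=mmeq=d(neq,inj)}, \ref{Theorem.mms*=mmeq*=d(neq)}, and \ref{Theorem.mms*,n=mmeq*,n=d(neq)}. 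Part~(1) is handled immediately and separately: Theorem~\ref{Theorem.Mastermind-win-by-stage-omega} supplies a countable winning set, which the codebreaker plays at the finite stages and then names the codeword at stage $\omega$, winning before $\omega+1$; and Theorem~\ref{Theorem.Mastermind-no-finite-strategy} forbids a win at any finite stage, so the least adequate ordinal is exactly $\omega+1$.

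\textbf{Upper bounds.} For each remaining part I would fix an eventually-different family realizing the relevant cardinal---$D\of\inj\omega$ of size $\frak{d}({\neq^*},\inj\omega)=\mm$ for part~(2), $D\of{}^\omega\omega$ of size $\frak{d}({\neq^*})=\mms^*$ for part~(3), $D\of{}^\omega n$ of size $\frak{d}({\neq^*},{}^\omega n)=\continuum$ for part~(4), and a system of representatives of the finite-difference classes for part~(5)---and have the codebreaker play its members one per stage. The moment some guessword $f_{\alpha^*}$ returns a finite correctness count (resp.\ finite incorrectness count), the codeword $w$ agrees with (resp.\ differs from) $f_{\alpha^*}$ in only finitely many places, and then the determination arguments already used in Theorems~\ref{Theorem.mm=mms=mmeq=d(neq,inj)} and \ref{Theorem.mms*=mmeq*=d(neq)}---probing the countably many finite variations of $f_{\alpha^*}$---let her read off $w$ exactly and name it. Since every codeword is eventually different from some $f_{\alpha^*}$ with $\alpha^*$ below the target cardinal, and since these cardinals are uncountable and hence closed under $\alpha^*\mapsto\alpha^*+\omega+1$, the win occurs strictly before the cardinal; because this strategy consults only the correctness (resp.\ incorrectness) count, it bounds the starved strategic number and yields ``$\leq$'' in every chain.

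\textbf{Lower bounds.} Against an arbitrary codebreaker strategy $\sigma$ claimed to win before some stage $\gamma$ strictly below the target cardinal, I would run $\sigma$ against the fixed maximally-uninformative feedback---$(\omega,\omega,\varepsilon)$ for full Mastermind (with $\varepsilon$ equal to $0$ or $\omega$ according as the current guess is onto the color set), $(\omega,\omega)$ for simplified, $\kappa=\omega$ for correctness-only, and infinite incorrectness for incorrectness-only. As this is never the winning answer it is deterministic and produces a \emph{fixed} guessword sequence $\<s_\alpha\mid\alpha<\gamma>$ of size $<$ the cardinal, generated before any codeword is chosen. The crux is then to exhibit a single codeword $c$ (using every color) that genuinely returns this feedback to every $s_\alpha$ while differing from each of them: its play under $\sigma$ is then forced to be exactly $\<s_\alpha\mid\alpha<\gamma>$, which never shows the winning answer, contradicting that $\sigma$ won before $\gamma$. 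For the $\cov(\mathcal{M})=\frak{m}(\C)$ cases (parts~2 and~3) such a $c$ exists because having infinite agreement, infinite disagreement, and full color use with a fixed word are each comeager conditions, and fewer than $\cov(\mathcal{M})$ comeager sets always meet---this is the genericity of Theorems~\ref{Theorem.Mastermind-no-repetition} and \ref{Theorem.MA-implies-m=c} applied at scale $<\cov(\mathcal{M})$. For the incorrectness-only cases (part~5) no genericity is needed at all: only the countably many finite variations of each $s_\alpha$ fail to have infinite disagreement with it, so fewer than $\continuum$ guesswords exclude fewer than $\continuum$ codewords, and a suitable $c$ survives by sheer cardinality, exactly as in the theorem establishing $\mmneq^{*,n}=\mmneq^*=\mmneq=\continuum$.

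\textbf{Main obstacle.} I expect the finite-color lower bound in part~(4) to be the delicate point, since there the comeager-intersection argument is worthless when $\cov(\mathcal{M})<\continuum$: infinite agreement with a word is comeager, yet fewer than $\continuum$ comeager sets need not meet. Here I would invoke precisely Corey Switzer's theorem inside Theorem~\ref{Theorem.mms*,n=mmeq*,n=d(neq)}, which says that any family of fewer than $\continuum$ functions in ${}^\omega n$ admits a common word of infinite agreement; applying it to $\set{s_\alpha}$ together with the color-shifted words $s_\alpha'(k)=s_\alpha(k)+1\bmod n$ simultaneously secures infinite agreement and infinite disagreement with every $s_\alpha$, delivering the fooling $c$. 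A secondary point requiring care throughout is the ordinal bookkeeping separating ``winning at the cardinal'' from ``winning strictly before it'': the adaptive upper-bound strategy must be arranged so that the probing phase and the final naming move all land below the cardinal, which is exactly what forces the strategic numbers to equal $\mm$, $\mms^*$, and $\continuum$ themselves rather than their ordinal successors.
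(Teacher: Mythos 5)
Your proposal is correct and follows essentially the same route as the paper's proof: monotonicity among the strategic variants, upper bounds by playing a small (eventually different) family and switching to finite-variation probing once a finite count appears, lower bounds by running the strategy against the stonewalling feedback and producing a surviving generic codeword (via $\cov(\mathcal{M})=\frak{m}(\C)$ genericity, the color-shift trick with Switzer's theorem for the $n$-color case, and pure cardinality for incorrectness-only), with the cardinal identifications quoted from the earlier theorems. Your write-up is in places even more explicit than the paper's (notably the part (4) lower bound, which the paper only gestures at), but the underlying argument is the same.
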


\begin{proof}
For statement (1), notice that theorems \ref{Theorem.Mastermind-win-by-stage-omega} and \ref{Theorem.Mastermind-no-finite-strategy} show that the codebreaker can win duplication-allowed infinite Mastermind by stage $\omega$, but not earlier. It follows that  $\mmstr^*=\mmstr^{*,n}=\omega+1$.

For statement (2), we easily observe that $\mmstr\leq\mmstrs\leq\mmstreq$ because the simplified games are at least as hard to win for the codebreaker as the unsimplified games. Next, let me argue that $\mmstreq\leq\mmeq$. Fix any winning correctness-only Mastermind set of guesswords, and simply place them one after the other as guesswords in a minimal order type. Eventually, one of the guesswords must achieve a finite correctness count, for otherwise the winning set would not distinguish the codeword from its finite variations. As soon as a finite correctness count is achieved, then we switch to the method of theorem \ref{Theorem.mm=mms=mmeq=d(neq,inj)}, whereby with $\omega$ many additional guesses concering finite variations of that finite-count guessword we will determine the true codeword, and therefore achieve the win at the next simple limit ordinal, which will be before the next cardinal and therefore before $\mmeq$.

Since $\mmeq=\mm$, we complete the cycle by proving that $\mm\leq\mmstr$. For this, it suffices by theorem \ref{Theorem.mm=mms=mmeq=d(neq,inj)} to show that $\frak{m}(\C)\leq\mmstr$. To see this, suppose toward contradiction that the codebreaker has a winning strategy to win always before stage $\gamma$, for some ordinal $\gamma<\frak{m}(\C)$. Consider now the method of theorem \ref{Theorem.Mastermind-no-repetition}, where the madster answers every guessword with feedback $(\omega,\omega,\varepsilon)$, meaning infinite correctness count, infinite rearrangement count, and inherent error count $\varepsilon$ either of $0$ or $\omega$, depending on whether every color appears in the guessword. The madster is thinking of a codeword that is highly generic, meeting every codeword infinitely, disagreeing infinitely, yet using every color exactly once. The point now is that if the game has proceeded for $\gamma$ many steps, then by viewing $\C$ as the forcing to add an injective function $g:\omega\to\omega$ and since $\gamma<\frak{m}(\C)$, we can find such a sequence $g$ that is sufficiently generic with respect to these codewords up to stage $\gamma$. And so the madster retains a successful codeword in his pocket at stage $\gamma$ and so the strategy has not won after all always before stage $\gamma$.

Statement (3) is proved similarly. Namely, $\mmstrs^*\leq\mmstreq^*$ because the game is easier to win with more information. And $\mmstreq^*\leq\mms^*$ because one can use as guesswords an eventually different family, until one is found with a finite correctness count, and then win $\omega$ many stages after that. And finally, we use that $\mms^*=\frak{d}({\neq^*})$ to complete the cycle by proving $\frak{d}({\neq^*})\leq\mmstrs^*$, as follows. The madster can answer infinite agreement and infinite disagreement for all guesswords up to stage $\gamma$, and if $\gamma<\frak{d}({\neq^*})$, then indeed there will be a codeword exhibiting that behavior, and so the codebreaker has not won by that stage.

Statement (4) is proved similarly via the cycle $\mmstrs^{*,n}\leq\mmstreq^{*,n}\leq\mms^{*,n}=\continuum=\frak{d}({\neq^*},{}^\omega n)\leq\mmstrs^{*,n}$.

Finally, statement (5) is proved by observing that for any fewer than continuum many guesswords, there will always be codewords that have infinite disagreement with all of them.
\end{proof}\goodbreak

I summarize the results I have proved about the mastermind numbers in the following diagram:

$$
\begin{tikzpicture}[scale=.9,transform shape,every node/.style={minimum height=5ex,inner sep=8pt,outer sep=0pt,align=center},node distance = 0mm]
\draw node[fill=Yellow!20,text width=3em] (aleph0) {\baselineskip=25pt$\mm^*\break \mm^{*,n}\break \aleph_0$\par}
   node[right = of aleph0.east] (lt) {$<$}
   node[right = of lt.east,fill=Blue!15,text width=3em] (omega+1) {\baselineskip=25pt$\mmstr^*\break \mmstr^{*,n}\break \omega+1$\par}
   node[right = of omega+1.east] (lt) {$<$}
   node[fill=Red!15,right=of lt.east] (aleph1) {$\aleph_1$}
   node[right = of aleph1.east] (leq) {$\leq$}
   node[right = of leq.east, fill=Orchid!20,text width=8em] (cov) {\baselineskip=25pt
     $\mm\qquad\mmstr\break \mms \qquad \mmstrs\break \mmeq\qquad\mmstreq\break
    \mms^* \qquad\mmstrs^*\break \mmeq^*\qquad \mmstreq^*\break
    \frak{d}({\neq^*})\break \frak{d}({\neq^*},\inj\omega) \break
    \cov(\mathcal{M})\break \frak{m}(\C)$\par}
  node[right = of cov.east] (leq) {$\leq$}
  node[right = of leq.east,fill=Orange!20,text width=6.5em] {\baselineskip=25pt
   $\mmneq\qquad \mmstrneq\break \mmneq^*\qquad\mmstrneq^*\break \mmneq^{*,n}\qquad\mmstrneq^{*,n}\break \mms^{*,n}\qquad\mmstrs^{*,n}\break \mmeq^{*,n}\qquad\mmstreq^{*,n}\break
   \frak{d}({\neq^*},{}^\omega n)\break 2^{\aleph_0}\break\continuum$\par};
\end{tikzpicture}
$$

\printbibliography


\end{document}